\documentclass{article}

\usepackage{amsmath}
\usepackage{amsfonts}
\usepackage{amssymb}
\usepackage{amsthm}
\usepackage{enumerate}
\usepackage{bbm}
\usepackage{hyperref}

\newtheorem{theorem}{Theorem}[section]
\newtheorem{lemma}[theorem]{Lemma}
\newtheorem{claim}[theorem]{Claim}

\newtheorem{corollary}[theorem]{Corollary}

\newtheorem{conjecture}[theorem]{Conjecture}

\newcommand{\suchthat}{\;\ifnum\currentgrouptype=16 \middle\fi|\;}

\newcommand{\R}{\mathbb{R}}

\DeclareMathOperator{\disc}{disc}

\newcommand{\E}{{\rm I\kern-.3em E}}
\newcommand{\Var}{\mathrm{Var}}

\begin{document}
\title{A spectral bound on hypergraph discrepancy}
\author{
Aditya Potukuchi \thanks{Department of Computer Science,  Rutgers University. {\tt aditya.potukuchi@cs.rutgers.edu}. Research supported in part by NSF grant 
CCF-1514164} 
}

\maketitle

\begin{abstract}
Let $\mathcal{H}$ be a $t$-regular hypergraph on $n$ vertices and $m$ edges. Let $M$ be the $m \times n$ incidence matrix of $\mathcal{H}$ and let us denote $\lambda =\max_{v \perp \overline{1},\|v\| = 1} \|Mv\|$. We show that the discrepancy of $\mathcal{H}$ is $O(\sqrt{t} + \lambda)$. As a corollary, this gives us that for every $t$, the discrepancy of a random $t$-regular hypergraph with $n$ vertices and $m \geq n$ edges is almost surely $O(\sqrt{t})$ as $n$ grows. The proof also gives a polynomial time algorithm that takes a hypergraph as input and outputs a coloring with the above guarantee.
\end{abstract}

\section{Introduction}

The main aim of this paper is to give a spectral condition that is sufficient for the discrepancy of a regular hypergraph to be small. This is proved via the partial coloring approach while using some combinatorial properties of the hypergraph that are given by this spectral condition. This immediately implies, via an old proof technique of Kahn and Szemer\'{e}di, that for every $t$, the discrepancy of a random $t$-regular hypergraph on $n$ vertices and $m \geq n$ edges is almost surely $O\left(\sqrt{t}\right)$. Previously, a result of this form was proved by Ezra and Lovett~\cite{EL15} who show that the discrepancy of a random $t$-regular hypergraph on $n$ vertices and $m\geq n$ edges is $O(\sqrt{t \log t})$ almost surely as $t$ grows. More recently, Bansal and Meka~\cite{BM19} showed that for random $t$-regular hypergraphs on $n$ vertices and $m$ edges, the discrepancy is $O\left( \sqrt{t} \right)$ almost surely provided $t = \Omega\left((\log \log m)^2\right)$. To state our result formally, we make some definitions. \\

Let $\mathcal{H} = (V,E)$ be a hypergraph, with $V$ as the set of vertices, and $E \subseteq 2^{V}$ as the set of (hyper)edges. Let $\mathcal{X} = \{\chi : V \rightarrow \{\pm 1\}\}$, be the set of $\pm 1$ colorings of $V$, and for $\chi \in \mathcal{X}$, and $e \in E$, denote $\chi(e) := \sum_{v \in e}\chi(v)$. The discrepancy of $\mathcal{H}$, denoted by $\disc(\mathcal{H})$ is defined as:
\[
\disc(\mathcal{H}) := \min_{\chi \in \mathcal{X}}\max_{e \in E} |\chi(e)|.
\]

We call a hypergraph \emph{$t$-regular} if every vertex is present in exactly $t$ hyperedges. These will be the main focus of this paper. For a hypergraph $\mathcal{H}$, let $M = M(\mathcal{H})$ be the $|E| \times |V|$ incidence matrix of $\mathcal{H}$, i.e.,  $M$ has rows indexed by $E$, columns indexed by $V$, and entries are $M(e,v) = 1$ if $v \in e$ and $0$ otherwise. We will use $\|\cdot \|$ to denote the Euclidean norm throughout the paper. Our main result is the following:

\begin{theorem}
\label{thm:main}
 Let $\mathcal{H}$ be a $t$-regular hypergraph on $n$ vertices and $m$ edges with $M$ as its incidence and let $\lambda = \max_{v \perp \overline{1}, \|v\| = 1}\|Mv\|$. Then
\[
\disc(\mathcal{H}) = O\left(\sqrt{t} + \lambda\right).
\]
Moreover, there is an $\tilde{O}((\max\{n,m\})^{7})$ time algorithm that takes the hypergraph $\mathcal{H}$ as input and outputs the coloring with the above guarantee.
\end{theorem}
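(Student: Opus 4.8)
The strategy I would follow is the \emph{partial coloring} method of Beck--Fiala/Spencer type (more precisely the entropy/Gluskin--Giannopoulos convex-geometric version, or equivalently Lovett--Meka's algorithmic walk), applied iteratively. The key point is that the spectral hypothesis $\lambda = \max_{v \perp \overline 1,\|v\|=1}\|Mv\|$ lets me control the \emph{number of edges that are "large" for a typical color class}, which is exactly the input a partial-coloring lemma needs in order to promise a constant fraction of coordinates get colored $\pm 1$ while every edge constraint $|\chi(e)|$ stays bounded.

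First I would set up one round: suppose we currently have a fractional coloring $x \in [-1,1]^V$ supported (fractionally) on a set $W$ of $\ge n/2$ "alive" vertices. I want to find an update $y$ with $\|y\|$ comparable to $\sqrt{|W|}$, supported on $W$, moving at least $|W|/2$ coordinates to $\pm 1$, such that $|\langle M_e, y\rangle| = O(\sqrt t + \lambda)$ for every edge $e$. To do this I partition the edge set into "big" edges (those with $|e \cap W|$ larger than some threshold $\Theta(t)$ — there can only be few of these because $\sum_e |e\cap W| = \sum_{v\in W} t = t|W|$, so at most $O(|W|)$ edges have $|e\cap W| \ge \Omega(t)$ by double counting... actually I need the bound in terms of $\lambda$: writing $\mathbbm{1}_W = \tfrac{|W|}{n}\overline 1 + w$ with $w \perp \overline 1$, I get $\sum_e |e\cap W|^2 = \|M\mathbbm{1}_W\|^2 \le \tfrac{|W|^2}{n^2}\|M\overline 1\|^2 + \lambda^2\|w\|^2 = \tfrac{|W|^2 t m}{n^2}\cdot\tfrac{n}{?}+\lambda^2|W| $ — the clean statement is $\sum_e |e\cap W|^2 \le t|W|\cdot\tfrac{|W|t}{n} + \lambda^2 |W|$ roughly, so the number of edges with $|e\cap W| \ge C(\sqrt t + \lambda)\cdot\sqrt{?}$ is at most $|W|/100$). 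The "big" edges I handle by brute force: there are $\le |W|/100$ of them, so I impose the hard constraint $\langle M_e,y\rangle = 0$ on each, which removes $\le |W|/100$ degrees of freedom. The "small" edges $e$ have $|e\cap W| = O(t + \lambda\sqrt{\cdot})$, so a random $\pm$ assignment on $W$ satisfies $|\langle M_e,y\rangle| = O(\sqrt{t}+\lambda)$ with probability $\ge 1-\tfrac1{4m}$ by Hoeffding; feeding this into the Lovett--Meka partial coloring lemma (a region defined by the small-edge slab constraints of half-width $O(\sqrt t + \lambda)$ plus the big-edge hyperplanes plus the box) yields a partial coloring making $\ge |W|/2$ new coordinates integral.

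Next, I iterate: after round $i$ the alive set $W_i$ has size $\le n/2^i$, so after $O(\log n)$ rounds every vertex is $\pm 1$. Crucially the spectral bound must be re-applied with $W = W_i$ each round, and the discrepancy incurred in round $i$ on any fixed edge is $O(\sqrt{t}+\lambda)$ — wait, that would sum to $O((\sqrt t+\lambda)\log n)$, which is too much. The fix, which is the real content: in round $i$ I should only require half-width $O(\sqrt{t_i} + \lambda_i)$ where $t_i$ is the \emph{max residual degree} $\max_e |e\cap W_i| \le \text{(typically)}\; t/2^i$ and similarly the relevant spectral quantity shrinks, so the round-$i$ increment is $O(\sqrt{t/2^i}+\lambda/2^{i/2}+\dots)$ and the geometric sum telescopes to $O(\sqrt t + \lambda)$. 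Edges whose residual degree fails to shrink (the "big" edges of that round) are exactly the ones I froze, so they accumulate no further error. This is the standard Beck--Fiala-with-spectral-input bookkeeping and it is where I expect the main difficulty to lie: carefully defining the threshold, verifying via the $\lambda$-bound that at every scale the number of not-yet-small edges is a small constant fraction of $|W_i|$ (so the hyperplane constraints never exhaust the dimension), and checking the energy-decrement argument still gives a constant fraction of new integral coordinates.

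Finally, for the algorithmic claim I would use the polynomial-time Lovett--Meka partial coloring algorithm (a Brownian-motion-in-the-polytope walk) in place of the existential entropy argument; each of the $O(\log n)$ rounds runs in time polynomial in $n+m$, the dominant cost being the linear-algebra projections onto the span of the $O(|W_i|)$ frozen-edge hyperplanes, giving the stated $\tilde O(\max\{n,m\}^7)$ bound after accounting for the per-step cost and number of steps. I would remark that the $\lambda$ in the final bound can be taken to be the value at the \emph{first} scale since later scales only help, so no adaptive spectral computation is needed.
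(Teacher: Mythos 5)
Your high-level approach — iterative Lovett--Meka partial coloring, freezing a small fraction of edges each round, geometric decay of per-round discrepancy — is indeed the paper's approach, but there is a genuine gap in how you propose to enforce the geometric decay, and it is exactly the step you flag as ``the real content.'' You conflate ``big'' edges (those with $|e\cap W_i|$ large, which you can count by Markov from $\sum_e |e\cap W_i| = t|W_i|$ — no spectral input needed) with edges that ``fail to shrink proportionally.'' These are different sets: a residual edge of size $\Theta(t)$ can easily stay size $\Theta(t)$ after halving $W_i$, and such an edge is not ``big'' by your threshold, so you never freeze it, and it contributes $\Omega(\sqrt{t})$ in every round, giving $\Omega(\sqrt{t}\log n)$ total — precisely the bad bound you were trying to avoid. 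The correct role of the spectral hypothesis is captured by the paper's Lemma~\ref{lem:pseudorandom}: for $S$ of size $\alpha n$, $\sum_e \left(|e\cap S| - \alpha|e|\right)^2 \le \lambda^2\alpha(1-\alpha)n$, so at most $\alpha n / K^2$ edges have $\bigl||e\cap S| - \alpha|e|\bigr| \ge K\lambda$. This controls the number of edges that \emph{deviate from proportional shrinkage}, not the number of large edges; those ``bad'' edges (plus the ``dormant'' oversized ones counted by Markov) together number $\le |W_i|/50$ and are the ones frozen each round. Your quantity $\sum_e |e\cap W|^2 = \|M\mathbbm{1}_W\|^2$ is not the right thing to bound; note also your decomposition drops the cross term $\langle M\overline 1, Mw\rangle$, which need not vanish.

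A second, related error: you assert ``the relevant spectral quantity shrinks'' and write increments of the form $O(\lambda/2^{i/2})$. The quantity $\lambda$ is a property of the fixed matrix $M$ restricted to $\overline 1^\perp$; it does not decay across rounds, and the paper never claims it does. Instead, $\lambda$ enters via a third edge category, ``dead'': once $|e^i| \le 100\lambda$, the edge's \emph{total remaining} discrepancy is crudely bounded by $|e^i| = O(\lambda)$, and one only needs the geometric argument to control the discrepancy accumulated \emph{before} an edge becomes dead. Without the dead-edge cutoff, the telescoping sum you invoke does not produce the $+\lambda$ term but rather something divergent. Finally, your Hoeffding heuristic for small edges would give $O(\sqrt{t\log m})$, not $O(\sqrt t)$; the actual per-edge discrepancy $c_e\sqrt{|e^i|}$ in Theorem~\ref{thm:LovettMeka} requires choosing the constants $c_e$ bucketwise (the sets $A_\ell$) so that $\sum_e e^{-c_e^2/16} \le |V^i|/16$ is satisfied while keeping each $c_e$ bounded on the edges that matter — this is where the $\Omega(n)$-edges-get-zero-discrepancy feature of Lovett--Meka (as opposed to Beck's $\Omega(n/\log t)$) is essential, a point worth making explicit.
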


\subsection{Background}

The study of hypergraph discrepancy, which seems to have been first defined in a paper of Beck~\cite{BEC81}, has led to some very interesting results with diverse applications (see, for example~\cite{MAT98},~\cite{CHA00}). One of the most interesting open problems in discrepancy theory is what is commonly known as the Beck-Fiala conjecture, regarding the discrepancy of general $t$-regular hypergraphs. 

\begin{conjecture}[Beck-Fiala conjecture]
\label{conj:BF}
For a $t$-regular hypergraph $\mathcal{H}$, we have
\[
\disc(\mathcal{H}) = O(\sqrt{t}).
\]
\end{conjecture}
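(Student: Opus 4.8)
This is the Beck--Fiala conjecture, which remains open; what follows is the line of attack I would pursue rather than a proof. The first move is to replace colorings by vector balancing: scaling the columns of $M$ by $1/\sqrt t$ makes each column a unit vector in $\mathbb{R}^m$, so Conjecture~\ref{conj:BF} is implied by the Koml\'os conjecture (signs balancing unit vectors to $O(1)$ in $\ell_\infty$). Since Koml\'os is itself open and the best bound there is $O(\sqrt{\log n})$, the plan is not to cite it but to prove the partial-coloring statement one needs to iterate: for every $t$-regular $\mathcal{H}=(V,E)$ there is $\chi\in\{-1,0,1\}^V$ nonzero on at least $|V|/2$ coordinates with $|\chi(e)|\le C\sqrt t$ for all $e$, with $C$ absolute. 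The point that makes a \emph{single} round achievable is $t$-regularity in the form $\sum_{e\in E}|e\cap S| = t|S|$ on any uncolored set $S$: in Beck's entropy argument one assigns each edge rounding granularity $\Delta_e = C\sqrt t$, and since an edge with $|e\cap S|=L$ then contributes entropy only about $\log(\sqrt L/(C\sqrt t))$ (and nothing once $L\le C^2 t$), the total entropy is, heuristically, at most $\sum_{L\ge C^2 t}\frac{t|S|}{L}\log\!\big(\frac{\sqrt L}{C\sqrt t}\big) = O(|S|/C^2)$, which is below $|S|/2$ for $C$ a large constant. So one round of partial coloring at cost $O(\sqrt t)$ per edge is within reach.

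The difficulty is the iteration. Applying the above $\Theta(\log n)$ times --- each round on the surviving vertices $S_i$, with $|S_{i+1}|=|S_i|/2$ --- gives a full coloring with $\disc(\mathcal H) = O(\sqrt t\,\log n)$, which is only the Beck--Fiala--Spencer bound, not $O(\sqrt t)$. To remove the $\log n$ one would need the per-round cost to decay geometrically; the natural hope is that the ``active'' part of the hypergraph shrinks, since the surviving incidence count $\sum_{e}|e\cap S_i| = t|S_i|$ halves each round, so after round $i$ at most $t|S_i|$ edges meet $S_i$ at all and most meet it in $O(1)$ vertices. Turning ``few active, mostly small edges'' into a genuinely smaller discrepancy budget in round $i$, rather than a fresh $C\sqrt t$, is the crux. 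An alternative I would try in parallel is a structural dichotomy leaning on Theorem~\ref{thm:main}: if $\lambda = \max_{v\perp\overline 1,\|v\|=1}\|Mv\| = O(\sqrt t)$ then Theorem~\ref{thm:main} already finishes; otherwise the large second singular value of $M$ exposes a sparse partition of $V$, along which one colors the parts almost independently and recurses, reducing the conjecture to the pseudorandom case. Either route would also be algorithmic if one runs the Lovett--Meka random walk or the Bansal--Dadush--Garg--Lovett framework inside each round.

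The step I expect to be the true obstacle --- and the reason this is still a conjecture --- is exactly the failure of geometric decay: once $S$ is small, a single large edge can still contain all of $S$, so $\max_e|e\cap S|$ need not shrink with $|S|$, and the entropy method in round $i$ cannot be pushed below $\Omega(\sqrt{\log(m/|S_i|)}\cdot\sqrt t)$ for the edges with many survivors; summed over rounds this telescopes back to $\Theta(\sqrt{t\log n})$. This is why the strongest unconditional results require $t$ to be polylogarithmic in $m$ (Bansal--Meka's $t=\Omega((\log\log m)^2)$), and why the hard regime is moderate $t$ against very large $n$. Breaking it seems to require a Banaszczyk-type convex-geometric estimate that handles all $m$ constraints at once --- controlling the Gaussian measure of $\{x:\|Mx\|_\infty\le C\sqrt t\}$ directly, with no per-round and no $m$-dependent loss --- which is the one known mechanism that avoids logarithmic factors; I do not currently know how to carry such an estimate through for arbitrary $t$-regular incidence matrices.
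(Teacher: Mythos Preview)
There is no proof in the paper to compare against: Conjecture~\ref{conj:BF} is stated as an open problem, and the paper makes no attempt to prove it. You correctly identify this at the outset and offer a strategy discussion rather than a proof, which is the appropriate response.

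Your discussion is largely accurate and hits the real obstruction: a single round of partial coloring at cost $O(\sqrt{t})$ is attainable via the entropy/regularity count $\sum_e |e\cap S|=t|S|$, but iterating over $\Theta(\log n)$ rounds loses a logarithmic factor because $\max_e |e\cap S_i|$ need not decay with $|S_i|$. That is precisely the barrier, and your remark that Banaszczyk-type convex-geometric estimates are the one known mechanism that sidesteps per-round losses is on point (indeed Banaszczyk gives $O(\sqrt{t\log n})$ unconditionally, which your sketch reproduces). One small correction: the Bansal--Meka $t=\Omega((\log\log m)^2)$ result you cite is for \emph{random} $t$-regular hypergraphs, not an unconditional bound, so it does not illustrate ``the strongest unconditional results''; the unconditional state of the art remains $2t-\log^* t$ (Bukh) on the linear side and $O(\sqrt{t\log n})$ (Banaszczyk) on the square-root side. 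Your proposed dichotomy via Theorem~\ref{thm:main} is also more hopeful than the paper suggests: when $\lambda$ is large the paper does not extract any usable structural decomposition of $V$, it simply notes that the Beck--Fiala theorem already gives $O(t)=O(\lambda)$, so the ``recurse on a sparse partition'' branch would need a genuinely new idea.
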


Although this conjecture is usually stated for \emph{bounded degree} hypergraphs (as opposed to regular ones), this is not really an issue. One can always add hyperedges containing just
a single vertex and make it regular, which increases the discrepancy of the original
hypergraph by at most one. Beck and Fiala~\cite{BF81} also proved that for any $t$-regular hypergraph $\mathcal{H}$,
\[
\disc(\mathcal{H}) \leq 2t-1.
\] 

This is more commonly known as the Beck-Fiala theorem. Essentially the same proof can be done a bit more carefully to get a bound of $2t - 3$ (see~\cite{BH97}). Given Conjecture~\ref{conj:BF}, it is perhaps surprising that the best upper bound, due to Bukh~\cite{BUK16}, is ``stuck at" $2t - \log^*t$ for large enough $t$.\\

It is possible that one of the reasons that the discrepancy upper bounds are so far away from the conjectured bound (assuming it's true) is our inability to handle many `large' hyperedges. Indeed, if one is offered the restriction that each hyperedge is also of size $O(t)$ (regular and `almost uniform'), then a folklore argument using the Lov\'{a}sz Local Lemma shows that the discrepancy is bounded by $O(\sqrt{t \log t})$. The proof of Theorem~\ref{thm:main} also relies on being able to avoid dealing with large edges (which are few, if any, in number).

\subsection{Discrepancy in random settings}

Motivated by the long-standing open problem of bounding discrepancy of general $t$-regular hypergraphs, Ezra and Lovett~\cite{EL15} initiated the study of discrepancy of \emph{random} $t$-regular hypergraphs. By random $t$-regular hypergraph, we mean the hypergraph sampled by the following procedure: We fix $n$ vertices $V$ and $m$ (initially empty) hyperedges $E$. Each vertex in $V$ chooses $t$ (distinct) hyperedges in $E$ uniformly and independently to be a part of. They showed that if $m \geq n$, then the discrepancy of such a hypergraph is almost surely $O(\sqrt{t \log t})$ as $t$ grows. The proof idea is the following: First observe that most of the hyperedges have size $O(t)$. For the remaining large edges, one can delete one vertex from every hyperedge and make them pairwise disjoint. This allows one to apply a folklore Lov\'{a}sz Local Lemma based argument, but with a slight modification which makes sure that the large edges have discrepancy at most $2$. More recently, Bansal and Meka~\cite{BM19} reduced the discrepancy bound to $O(\sqrt{t})$ almost surely as long as $t = \Omega\left((\log \log n)^2\right)$ for all $m$ and $n$. A corollary of Theorem~\ref{thm:main} states that one can get the bound of $O(\sqrt{t})$ for every (not necessarily growing) $t  = t(n)$ as $n$ grows and $m \geq n$. More formally, 

\begin{corollary}
\label{cor:ranreg}
There is an absolute constant $C >0$ such that the following holds:
Let $\mathcal{H}_t$ be a random $t$-regular hypergraph on $n$ vertices and $m \geq n$ hyperedges where $t = o(\sqrt{m})$. Then,
\[
\mathbb{P}\left(\disc(\mathcal{H}_t) \leq C\sqrt{t}\right) \geq 1 - o(1)
\]
\end{corollary}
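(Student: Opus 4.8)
We need to show that a random $t$-regular hypergraph on $n$ vertices and $m \geq n$ edges has discrepancy $O(\sqrt{t})$ w.h.p., where $t = o(\sqrt{m})$. The strategy is clear: apply Theorem 1.2, so it suffices to show that $\lambda = \max_{v \perp \overline{1}, \|v\|=1} \|Mv\| = O(\sqrt{t})$ w.h.p. This is a statement about the second-largest singular value of the incidence matrix $M$, and the standard tool here is the Kahn–Szemerédi approach (originally for random regular graphs).

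Let me think about what $M^T M$ looks like. The entry $(M^T M)_{uv} = \sum_e M(e,u)M(e,v)$ counts the number of edges containing both $u$ and $v$. On the diagonal, $(M^T M)_{vv} = t$ exactly (every vertex is in exactly $t$ edges — $t$-regularity). Off-diagonal, $(M^T M)_{uv}$ is the codegree of $u,v$. In expectation, each of the $t$ edges containing $u$ is one of the $t$ edges containing $v$ with probability roughly $t/m$, so $\mathbb{E}[(M^T M)_{uv}] \approx t^2/m$. So $\mathbb{E}[M^T M] \approx tI + \frac{t^2}{m}(J - I)$ roughly, where $J$ is all-ones. The top eigenvector is $\overline{1}$ with eigenvalue $\approx t + \frac{t^2}{m}(n-1) \approx t + t^2 n/m$. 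Restricted to $\overline{1}^\perp$, the "expected" operator is essentially $tI$ minus small corrections, so we'd want $\lambda^2 = \|M v\|^2 \approx t$ for $v \perp \overline{1}$, giving $\lambda = O(\sqrt{t})$. The condition $t = o(\sqrt{m})$ ensures $t^2/m = o(1)$, i.e. the codegrees are genuinely small on average and the rank-one spike is well-separated.

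**The approach.** First, reduce to bounding $\lambda$ via Theorem 1.2. Then, to bound $\lambda = \|M\|_{\overline{1}^\perp \to \ell_2}$, use the Kahn–Szemerédi discretization/$\varepsilon$-net argument: it suffices to bound $\langle M x, y\rangle = \sum_{e} \sum_{v \in e} x_v y_e$ over $x \perp \overline{1}$, $\|x\| = \|y\| = 1$ ranging over a net, and one splits the sum into "light" pairs (where $|x_u y_e|$ is small, handled by a Chernoff/Bernstein bound plus union bound over the net) and "heavy" pairs (handled by a counting/bounded-degree argument exploiting $t$-regularity and the fact that few edges are large). Because vertices choose edges independently, the entries of $M$ are essentially independent (per column the choices are dependent within a column — $t$ distinct edges chosen without replacement — but columns are independent), which is cleaner than the random regular graph case. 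I would invoke a concentration inequality to show that for each fixed $x,y$ in the net, $\langle Mx, y \rangle$ is $O(\sqrt{t})$ except with probability $e^{-\Omega(t \cdot \text{poly})}$ small enough to beat the $e^{O(n \log n)}$ net size (here $m \geq n$ is used), and then a separate deterministic-conditional-on-degrees argument for the heavy part.

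**Main obstacle.** The delicate point is the heavy-pair / large-edge contribution. When $t$ is small (say constant, not growing), concentration of $\langle Mx,y\rangle$ around its mean is too weak to survive the union bound over a net directly, so the Kahn–Szemerédi split into bounded contributions is essential, and one must carefully control how much a few large edges (edges of size $\gg t$) can contribute to $\|Mv\|$. I'd handle this the way the introduction hints for Theorem 1.2: show that w.h.p. only $o(m)$ edges have size exceeding $Ct$, bound their total contribution to $\|Mv\|^2$ by a direct estimate (e.g. $\sum_{e \text{ large}} (\sum_{v\in e} x_v)^2 \leq (\max_e |e|) \cdot \sum_{v} x_v^2 \cdot (\#\text{large edges per vertex})$-type bound), and show it is $O(t)$. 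Getting the parameters to line up — the net granularity, the concentration exponent, and the large-edge count — using only $m \geq n$ and $t = o(\sqrt{m})$ is the real content; everything else is the standard Kahn–Szemerédi template.
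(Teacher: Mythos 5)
Your overall reduction is exactly the paper's: apply Theorem~\ref{thm:main} to reduce Corollary~\ref{cor:ranreg} to the spectral bound of Theorem~\ref{thm:norm}, then prove $\lambda = O(\sqrt{t})$ by the Kahn--Szemer\'edi $\epsilon$-net argument with a split into ``light'' and ``heavy'' pairs. So far so good.

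However, there is a genuine gap in how you propose to handle the heavy pairs, and it stems from conflating two different notions of ``large.'' In Kahn--Szemer\'edi, the heavy set $L$ is the set of index pairs $(u,v)$ for which $|x_u y_v| \ge \sqrt{t}/m$ --- it depends on the test vectors $x,y$, not on the hypergraph. This has nothing directly to do with hyperedges of size $\gg t$. Your ``main obstacle'' paragraph pivots entirely to bounding the contribution of \emph{large hyperedges} to $\|Mv\|^2$ via Cauchy--Schwarz, which is the wrong object on two counts. First, the quantity being controlled for the heavy part is $\sum_{(u,v)\in L} |x_u M_{u,v} y_v|$, a bilinear form, not a piece of $\|Mv\|^2$. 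Second, even as a bound on $\|Mv\|^2$ restricted to large edges, the estimate $\sum_{e\text{ large}} (\sum_{u\in e} x_u)^2 \le (\max_e|e|)\sum_u x_u^2 \cdot d_L(u)$ yields roughly $(\max_e|e|)\cdot \max_u d_L(u)$, which is not $O(t)$: a vertex can sit in several large edges, and $\max_e |e|$ can be $\Theta(t\log n/\log\log n)$, so nothing forces this product down to $O(t)$. The role of ``few large edges'' in the paper belongs to the proof of Theorem~\ref{thm:main} (the deterministic partial-coloring argument, where dormant edges are set to discrepancy zero), not to the proof of Theorem~\ref{thm:norm}.

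What the paper actually uses for the heavy pairs is the classical Kahn--Szemer\'edi ``discrepancy of incidences'' lemma (Lemma~\ref{lem:dense}): for all vertex subsets $A$ and edge subsets $B$, the incidence count $I(A,B)$ either is $O(\mu(A,B))$ or satisfies $I\log(I/\mu)\le C|B|\log(m/|B|)$, with failure probability $m^{-\Omega(1)}$. Combined with the deterministic FKS bookkeeping lemma (\cite{FKS} Lemma~2.6), this is what gives $\sum_{(u,v)\in L} |x_u M_{u,v} y_v| = O(\sqrt{t})$ uniformly over the net. Your sketch does not contain this step, and the Cauchy--Schwarz/large-edge substitute would not close the argument.
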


 The theorem that implies Corollary~\ref{cor:ranreg} from Theorem~\ref{thm:main} is the following:

\begin{theorem}
\label{thm:norm}
 Let $M$ be the incidence matrix of a random $t$-regular set system on $n$ vertices, where $t = o(\sqrt{m})$, and $m \geq n$ edges. Then with probability at least $1 - n^{\Omega(1)}$,
 \[
  \max_{v \perp \overline{1},\|v\| = 1}\|Mv\| = O\left(\sqrt{t}\right).
 \] 
\end{theorem}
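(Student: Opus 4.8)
The plan is to follow the Kahn--Szemer\'edi method for bounding the second eigenvalue of a random regular graph, applied to the bipartite incidence structure of $\mathcal H$ (one side being the $n$ vertices, each of degree exactly $t$; the other the $m$ edges, with degrees averaging $nt/m$). Writing $M$ for the incidence matrix, we have $\max_{x\perp\overline{1},\,\|x\|=1}\|Mx\| = \sup\{\langle y,Mx\rangle : \|x\|=\|y\|=1,\ x\perp\overline{1}\}$, the supremum over unit vectors $x\in\R^n$ with $x\perp\overline{1}$ and unit $y\in\R^m$. A standard $\varepsilon$-net argument with a small constant $\varepsilon$ (the net for $x$ chosen inside $\overline{1}^\perp$) reduces the problem to showing that, with probability $1-n^{-\Omega(1)}$, one has $\langle y,Mx\rangle = O(\sqrt t)$ simultaneously for all $x$ in a net $\mathcal N_x$ of size $e^{O(n)}$ and all $y$ in a net $\mathcal N_y$ of size $e^{O(m)}$. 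Since $m\ge n$, the union bound runs over $e^{O(m)}$ pairs, so it suffices to prove the bound for a \emph{fixed} $(x,y)$ with failure probability $e^{-\Omega(m)}$. Here the hypothesis $x\perp\overline{1}$ is used: since each vertex draws its $t$ edges as a uniform $t$-subset of $[m]$, $\E[(Mx)_e]=\tfrac tm\sum_v x_v = 0$, so $\E\langle y,Mx\rangle = 0$ and what we are really after is a deviation estimate. (The condition $t=o(\sqrt m)$ guarantees in addition that $\E[M^\top M]$ restricted to $\overline{1}^\perp$ equals $(t-t^2/m)I=(1-o(1))tI$, so that the ``signal'' lives entirely along $\overline{1}$.)

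Fix $(x,y)$ and a threshold $\theta$. Call an incident pair $(v,e)$ \emph{light} if $|x_v y_e|\le\theta$ and \emph{heavy} otherwise, and split $\langle y,Mx\rangle = L+H$ accordingly. For the light part, grouping by vertex gives $L=\sum_v x_v\!\!\sum_{e\in S_v,\ (v,e)\text{ light}}\!\! y_e$, a sum of \emph{independent} random variables, one per vertex $v$ and depending only on $v$'s chosen $t$-set $S_v$, each of magnitude at most $t\theta$, with total variance at most $\tfrac tm\|x\|^2\|y\|^2=\tfrac tm$ by the negative association of sampling without replacement; its mean is easily bounded by $O(\sqrt t)$ as well. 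Bernstein's inequality then gives $|L|=O(\sqrt t)$ except with probability $e^{-\Omega(m)}$ as long as $\theta$ is small enough (of order $1/(m\sqrt t)$), and a union bound over the nets disposes of the light part.

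The heavy part is where the real work lies, and it rests on a combinatorial input that holds for the random hypergraph with probability $1-n^{-\Omega(1)}$: a Kahn--Szemer\'edi-type counting estimate stating that for \emph{all} vertex sets $A\subseteq[n]$ and edge sets $B\subseteq[m]$, writing $\mu(A,B)=t|A||B|/m$ for the expected number of incidences between $A$ and $B$, either $e(A,B)=O(\mu(A,B))$, or
\[
e(A,B)\log\frac{e(A,B)}{\mu(A,B)}=O\!\left(|A|\log\frac{2n}{|A|}+|B|\log\frac{2m}{|B|}\right).
\]
This is proved by a Chernoff bound on $e(A,B)$ (a sum of $|A|$ independent hypergeometric variables) together with a union bound over the $e^{O(m)}$ pairs $(A,B)$; the extreme regimes --- very small $A,B$, where $\mu(A,B)\ll 1$, and very large edge sets --- need separate care, and it is here that $m\ge n$ and $t=o(\sqrt m)$ are used (for instance, $t=o(\sqrt m)$ forces every edge of $\mathcal H$ to have size $o(\sqrt m)$). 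Given this estimate, one decomposes $x$ and $y$ dyadically by coordinate magnitude, $A_i=\{v:|x_v|\in(2^{-i-1},2^{-i}]\}$ and $B_j=\{e:|y_e|\in(2^{-j-1},2^{-j}]\}$ (so $|A_i|\le 4\cdot 4^i$ and $|B_j|\le 4\cdot 4^j$), bounds the heavy contribution between $A_i$ and $B_j$ by $O(2^{-i-j}e(A_i,B_j))$, inserts the counting estimate, and sums over the scales --- finitely many, since heaviness forces $2^{-i-j}\gtrsim\theta$. Using $\|x\|=\|y\|=1$ and $m\ge n$, this sum evaluates to $O(\sqrt t)$; combined with the light bound it gives $\langle y,Mx\rangle=O(\sqrt t)$ for every net point, and passing from the nets back to the unit sphere proves the theorem.

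The main obstacle is exactly the heavy-part analysis: formulating the counting estimate with the precise trade-off that is strong enough for the dyadic sum to close yet weak enough to survive the union bound --- including for the tiny and the very large sets --- and then carrying out that sum so that it produces $O(\sqrt t)$ rather than $O(\sqrt{t\log t})$ (the latter being what a crude, single-threshold split yields, and precisely the $\sqrt{\log t}$ factor that this theorem saves over~\cite{EL15}). This requires careful, scale-sensitive bookkeeping --- in effect letting the light/heavy threshold, and the way the counting estimate is applied, vary with the dyadic scale --- rather than the one-line choices that suffice elsewhere. By contrast, once the split is set up, the light part is a routine application of Bernstein's inequality.
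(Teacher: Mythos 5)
Your high-level plan (reduce to an $\varepsilon$-net, split incident pairs by magnitude of $x_v y_e$, treat the heavy pairs with a Kahn--Szemer\'edi counting estimate and a dyadic decomposition, treat the light pairs with a concentration inequality) is exactly the paper's, and the counting lemma you state is essentially the paper's Lemma~\ref{lem:dense}. But the treatment of the light part has a gap that ripples back and actually breaks the heavy part.

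You group the light contribution by vertex to get $n$ independent terms, bound each crudely by $t\theta$, and then apply Bernstein; with variance $t/m$ this forces the threshold $\theta$ down to order $1/(m\sqrt t)$ to reach a $e^{-\Omega(m)}$ tail. That threshold is a factor of $t$ below the Kahn--Szemer\'edi threshold $\sqrt t/m$, and that is fatal in two ways. First, your claim that the mean of the light part is $O(\sqrt t)$ is false at this threshold: using $x\perp\overline 1$ gives $|\E L|=\tfrac tm\bigl|\sum_{\text{light}}x_vy_e\bigr|=\tfrac tm\bigl|\sum_{\text{heavy}}x_vy_e\bigr|\le\tfrac tm\cdot\tfrac1\theta$, which is $\sqrt t$ when $\theta=\sqrt t/m$ but $t^{3/2}$ when $\theta=1/(m\sqrt t)$. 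Second, the heavy dyadic sum no longer closes: the pairs of scales $(i,j)$ that you must treat satisfy $2^{i+j}\lesssim 1/\theta$, and with $e(A_i,B_j)=O(\mu(A_i,B_j))$ each such pair contributes up to $O\bigl(t\cdot 2^{i+j}/m\bigr)$; this is $O(\sqrt t)$ only if $1/\theta\lesssim m/\sqrt t$, i.e.\ only at the paper's threshold, while at $\theta=1/(m\sqrt t)$ a single scale already contributes $\Theta(t)$. So the ``sums over the scales to $O(\sqrt t)$'' step is not salvageable as stated.

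The paper's way around this is to keep the threshold at $\sqrt t/m$ but not to group the $t$ choices of each vertex into one Bernstein summand. It instead reveals the edge memberships one at a time in a martingale; each one-step difference involves a single light term $x_iy_k$ and is therefore bounded by $2\sqrt t/m$ (not $t^{3/2}/m$), while the martingale variance is still $O(t/m)$, and Freedman/Azuma with bounded variance gives the $e^{-\Omega(m)}$ tail at deviation $O(\sqrt t)$. Your per-vertex grouping can also be made to work, but only if you replace the crude range bound $t\theta$ by the correct subexponential scale parameter: each per-vertex inner sum $\sum_{e\in S_v,\,\text{light}}y_e$ is, by Bernstein for sampling without replacement, subexponential with variance $O(t/m)$ and \emph{scale} $O(\theta/|x_v|)$, so $x_v$ times it has scale $O(\theta)$, not $O(t\theta)$; summing independent subexponentials then gives the needed tail at $\theta=\sqrt t/m$. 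Either fix restores the correct threshold, after which the heavy-part analysis you outline (which is the paper's, and already produces $O(\sqrt t)$ with no extra $\sqrt{\log t}$ loss) goes through. As written, though, the proposal's light/heavy trade-off is inconsistent.

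Two smaller points: the $\sqrt{\log t}$ factor in Ezra--Lovett comes from their Lov\'asz Local Lemma argument, not from a crude Kahn--Szemer\'edi bound, so ``making the dyadic sum give $\sqrt t$ rather than $\sqrt{t\log t}$'' is not where the difficulty lies; and $t=o(\sqrt m)$ does not force every edge to have size $o(\sqrt m)$ (edge sizes concentrate around $nt/m$, but the maximum can exceed it) --- in the paper that hypothesis is used only in the estimate $\binom{m-b}{t-t_i}/\binom{m}{t}\le 2(2t/m)^{t_i}$ inside the counting lemma.
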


A couple of remarks here: First, observe that it suffices to prove Theorem~\ref{thm:norm} for $m = n$. Indeed, let $M$ and $N$ be random $m \times m$ and $m \times n$ random matrices (m > n) respectively distributed by choosing $t$ random $1$'s in each column independently. Notice that the distribution of $N$ is exactly the same as that of the first $n$ columns of $M$. Then, setting $M_n$ to be the matrix consisting of the first $n$ columns of $M$, we observe that $\lambda(M_n) \leq \lambda(M)$. Second, we point out that $t = o(\sqrt{m})$ is just a limitation of the proof technique in~\cite{FKS} (also see~\cite{BFSU98}) that we use to prove this theorem. Although we believe that Theorem~\ref{thm:norm} should hold for all $t < m$, we do not make any attempt to verify this, especially since the result of Bansal and Meka~\cite{BM19} already takes care of the discrepancy of random hypergraphs in this case. \\

Although many variations of Theorem~\ref{thm:norm} are known and standard, one needs to verify it for our setting too. It should come as no surprise that the proof follows that of Kahn and Szemer\'{e}di's \footnote{\cite{FKS} is combination of two papers that prove the same result upto a constant factor: one by Friedman using the so-called trace method, and the other by Kahn and Szemer\'{e}di using a more combinatorial approach which is flexible enough to be easily adapted here.} in~\cite{FKS}, which is postponed to the Appendix~\ref{sec:norm}.

\subsection{The partial coloring approach}

Most of the bounds and algorithms on hypergraph discrepancy proceed via a \emph{partial coloring approach}. In general, a partial coloring approach~\cite{BEC81} works by coloring a fraction of the (still uncolored) vertices in each step, while ensuring that no edge has discrepancy more than the desired bound. Perhaps the most famous successful application of this is Spencer's celebrated `six standard deviations' result~\cite{SPE85}, which gives a bound of $6\sqrt{n}$ for any hypergraph on $n$ vertices and $n$ edges. The original proof of Spencer was not algorithmic, i.e., it did not give an obvious way to take as input a hypergraph on $n$ vertices and $n$ edges, and efficiently output a coloring that achieves discrepancy $O(\sqrt{n})$. In fact, Alon and Spencer(\cite{AS00}, $\S 14.5$) suggested that such an algorithm is not possible. However, this was shown to be incorrect by Bansal~\cite{BAN10} who showed an efficient algorithm to do the same task. However, the analysis of this algorithm still relied on the (non-algorithmic) discrepancy bound of $6 \sqrt{n}$. Later, Lovett and Meka~\cite{LM15} gave a `truly constructive' proof of the fact that the discrepancy is $O(\sqrt{n})$. This proof did not rely on any existing discrepancy bounds and the novel and simple analysis proved to be extremely influential. The proof of Theorem~\ref{thm:main} will rely on a somewhat technical feature of the main partial coloring from this work. More recently, a result due to Rothvoss~\cite{ROT17} gives a simpler proof of the same $O(\sqrt{n})$ bound, which is also constructive, and more general.

\subsection{Proof sketch}

The proof of Theorem~\ref{thm:main} is proved via the aforementioned partial coloring approach. The main source of inspiration is a later paper of Spencer~\cite{SPE88}, which computes the discrepancy of the projective plane (i.e., the hypergraph where the vertices are the points and the hyperedges are the lines of $\text{PG}(2,q)$) upto a constant factor. A more general bound was also obtained by Matou\v{s}ek~\cite{MAT95}, who upper bounds the discrepancy of set systems of bounded VC-dimension (note that the projective plane has VC-dimension $2$). \\

We also use the aforementioned result of Lovett and Meka~\cite{LM15} heavily, in particular, the partial coloring theorem. Informally, this says that one can `color' roughtly an $\alpha$ fraction of the hypergraph with real numbers in $[-1,1]$ so that (1) at least half the vertices get colors $1$ or $-1$ and (2) every edge $e$ has discrepancy $O(\sqrt{e})$. We now sketch the proof.\\

Consider the following `dream approach' using partial coloring: In every step, one colors an $\alpha$ fraction of vertices. Suppose that at the start, every edge has size $O(t)$ and that each step of partial coloring colors exactly an $\alpha$ fraction of the remaining uncolored vertices (i.e., these vertices are colored from $\{-1,1\}$). Then the discrepancy of an edge $e$ is at most $O\left(\sum_{i}\sqrt{\alpha^i|e|}\right) = O(\sqrt{t})$. Of course, this is too much to hope for, since some edges can potentially be large, and  more importantly, there is no guarantee on how much of each edge gets colored in this partial coloring procedure. \\

This is precisely where the spectral condition on $M$ saves us. One can establish standard combinatorial `pseudorandomness' properties of $\mathcal{H}$ in terms of $\lambda$. In particular, if $\lambda$ is small, then an $\alpha$ fraction of $V(\mathcal{H})$ take up an $\alpha$ fraction of most edges. This means, intuitively, that in the partial coloring approach, if one colors an $\alpha$ fraction of the vertices, then most of the edge sizes will have also reduced by an $\alpha$ fraction. The partial coloring method of Lovett and Meka (and, curiously, none of the older ones) also allows one to color in such a way that $\Omega(n)$ edges can be made to have discrepancy zero in each step. This allows one to maintain that in every round of the partial coloring, the edges that don't behave according to the `dream approach', i.e., those that are too large (i.e., $\Omega(t)$) or don't reduce by an $\alpha$ fraction can be made to have discrepancy \emph{zero} in the next step. Thus, most other edges reduce in size by an $\alpha$ fraction. This lets one not have to deal with the discrepancy of these  `bad' edges until they become small.

\section{Proof of Theorem~\ref{thm:main}}
\label{sec:H1}

\subsection{Preliminaries and notation}

We will need the aforementioned partial coloring theorem due to Lovett and Meka:
\begin{theorem}[\cite{LM15}]
\label{thm:LovettMeka}
Given a family of sets $M_1,\ldots,M_m \subseteq [n]$, a vector $x_0 \in [-1,1]^n$, positive real numbers $c_1,\ldots,c_m$ such that $\sum_{i \in [m]}\exp\left(-c_i^2/16\right) \leq n/16$, and a real number $\delta \in [0,1]$, there is a vector $x \in [-1,1]^n$ such that:

\begin{itemize}
\item[1.] For all $i \in [m]$, $\langle x - x_0, \mathbbm{1}_{M_i}\rangle \leq c_i \sqrt{|M_i|}$.
\item[2.] $|x_i| \geq 1 - \delta$ for at least $n/2$ values of $i$.
\end{itemize}

Moreover, this vector $x$ can be found in $\tilde{O}((m+n)^3 \delta^{-2})$ time.
\end{theorem}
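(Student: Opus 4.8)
The final statement above is the Lovett--Meka partial coloring theorem itself, so the plan is to reproduce the convex-geometric argument from~\cite{LM15} rather than to use it as a black box. The high-level idea is to run a discretized Brownian motion (a random walk) inside the polytope $P$ cut out by the two families of constraints --- the ``soft'' edge constraints $\langle x - x_0, \mathbbm{1}_{M_i}\rangle \le c_i\sqrt{|M_i|}$ (together with their symmetric counterparts, or rather a single-sided slab suffices since we only need an upper bound) and the cube constraints $x_j \in [-1,1]$ --- and argue that after a bounded number of steps the walk is ``stuck'' against the boundary of the cube in at least $n/2$ coordinates. The walk will be constrained to move only within the subspace orthogonal to the normals of all currently tight constraints, so it never violates any constraint, and the key is to show that the dimension of this free subspace stays large throughout, which forces many cube constraints to become tight.

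\textbf{Key steps, in order.} First I would set up the random walk: start at $x_0$, and at each of $T = O(\delta^{-2})$ steps sample a Gaussian vector $g$ in $\R^n$, project it onto the subspace $U$ orthogonal to the normals of all edge-constraints that are currently ``nearly tight'' (within a small slab) and all coordinates $j$ with $|x_j|$ close to $1$, and take a small step $x \leftarrow x + \gamma \cdot \Pi_U g$ for an appropriately small step size $\gamma$; if a coordinate hits $\pm 1$ or an edge constraint becomes tight, freeze it into the ``tight'' set. Second, I would bound the number of edge constraints that can ever become tight: since each coordinate of the walk, restricted to direction $\mathbbm{1}_{M_i}/\|\mathbbm{1}_{M_i}\|$, behaves like a (sub-)martingale with variance accumulating at rate at most $1$ per step, after $T$ steps the displacement in that direction is a Gaussian with standard deviation $O(\sqrt{T})$; choosing constants so that $c_i\sqrt{|M_i|}$ corresponds to roughly $c_i$ standard deviations, a Gaussian tail bound gives that constraint $i$ becomes tight with probability $O(\exp(-c_i^2/16))$, so in expectation at most $\sum_i \exp(-c_i^2/16) \le n/16$ edge constraints are ever tight. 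Third --- the heart of the argument --- I would show that at the end, at least $n/2$ coordinates satisfy $|x_j| \ge 1 - \delta$: the total ``energy'' $\E\|x_T - x_0\|^2$ grows linearly, roughly like $\gamma^2 \cdot (\text{dimension of } U) \cdot T$ summed appropriately, and since $x_T - x_0$ lives in $[-2,2]^n$ we get $\E\|x_T-x_0\|^2 \le 4n$; combined with a lower bound showing the walk makes genuine progress in the free subspace unless many coordinates are frozen at the cube boundary, this forces the number of frozen-at-boundary coordinates to be at least $n/2$ (after subtracting off the at most $n/16$ coordinates lost to edge constraints and a small slack, which is why the constant is $n/16$ and not $n/2$). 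The ``at least $n/2$ with $|x_j| \ge 1-\delta$'' (rather than exactly $\pm 1$) is what makes $T = O(\delta^{-2})$ suffice: a coordinate within $\delta$ of the boundary can be declared frozen.

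\textbf{Main obstacle and running time.} The delicate part is the third step: making rigorous the claim that the walk is forced toward the cube boundary. This requires carefully balancing the step size $\gamma$, the slab width for the edge constraints, and the threshold $\delta$ for freezing coordinates, and then a second-moment / potential-function computation showing $\E\bigl[\#\{j : |x_j| < 1-\delta\}\bigr] < n/2$. One must also handle the subtlety that freezing coordinates against edge constraints reduces the dimension of $U$, but the budget $\sum_i \exp(-c_i^2/16) \le n/16$ is exactly what keeps this loss under control so that $\dim U$ stays $\Omega(n)$ long enough. For the algorithmic claim, each step requires computing the projection $\Pi_U$, which is a least-squares / linear-algebra operation costing $\tilde O((m+n)^3)$ via Gram--Schmidt or solving a linear system (and can be maintained incrementally), and there are $T = O(\delta^{-2})$ steps, giving the stated $\tilde O((m+n)^3 \delta^{-2})$ running time; a standard martingale concentration argument (e.g.\ Azuma applied to the relevant potentials) upgrades the in-expectation bounds to ``with constant probability,'' and the algorithm simply repeats until it succeeds.
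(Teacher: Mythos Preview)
The paper does not prove this theorem at all: Theorem~\ref{thm:LovettMeka} is quoted verbatim from \cite{LM15} and used as a black box throughout Section~\ref{sec:H1}. The only accompanying remark is that the original randomized algorithm has since been derandomized in \cite{LRR17}. So there is nothing in the paper to compare your proposal against; the paper's ``proof'' is simply the citation.

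That said, your sketch is a faithful outline of the Lovett--Meka argument itself. The three steps you identify --- (i) a discretized Gaussian walk projected onto the orthogonal complement of the normals of currently tight constraints, (ii) a Gaussian-tail union bound showing that in expectation at most $\sum_i e^{-c_i^2/16} \le n/16$ edge constraints ever become tight, and (iii) an energy argument bounding $\E\|x_T - x_0\|^2$ from above by $4n$ and from below by roughly $\gamma^2 T \cdot \dim U$, forcing many coordinates to hit the $\delta$-slab of the cube --- are exactly the skeleton of \cite{LM15}. The point you flag as delicate (balancing $\gamma$, the slab widths, and $T = O(\delta^{-2})$ so that the potential argument goes through while $\dim U$ stays $\Omega(n)$) is indeed where the work in \cite{LM15} lies, and your identification of the budget $n/16$ as precisely what keeps the dimension loss under control is correct. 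If you were actually writing this up you would need to be more careful in step (iii): the lower bound on energy growth is not immediate and in \cite{LM15} is obtained by tracking $\E[\dim U_t]$ over time, not by a single inequality. But as a plan it is sound, and it matches the source rather than the present paper.
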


Lovett and Meka initially gave a randomized algorithm for the above. It has since been made deterministic~\cite{LRR17}.

\subsubsection{A technical remark:}
The reason we use the Lovett-Meka partial coloring, as opposed to Beck's partial coloring is not just the algorithmic aspect that the former offers, but also because it also offers the technical condition:
\[
\sum_{i \in [m]}\exp\left(-c_i^2/16\right) \leq n/16.
\] 

This means one can set $\Omega(n)$ edges to have discrepancy $0$. To compare, we first state Beck's partial coloring lemma (for reference, see~\cite{MAT98}):

\begin{theorem}[Beck's partial coloring lemma]
\label{thm:Beck}
Given a family of sets $M_1,\ldots,M_m \subseteq [n]$, and positive real numbers $c_1,\ldots,c_m$ such that $\sum_{i \in [m]}g(c_i) \leq n/5$, where

\[
g(x)=
        \begin{cases}
             e^{-x^2/9} & x > 0.1 \\
             \ln(1/x) &x \leq 0.1
        \end{cases}
\]

there is a vector $x \in \{-1,0,1\}^n$ such that:

\begin{itemize}
\item[1.] For all $i \in [m]$, $\langle x, \mathbbm{1}_{M_i}\rangle \leq c_i \sqrt{|M_i|}$.
\item[2.] $|x_i| = 1$ for at least $n/2$ values of $i$.
\end{itemize}
\end{theorem}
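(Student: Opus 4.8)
The plan is to prove this by Beck's entropy/pigeonhole (``partial coloring'') method. First I would fix, for each $i$, a width $\Delta_i \asymp c_i\sqrt{|M_i|}$ and define a rounding map $\phi_i\colon\{-1,1\}^n\to\mathbb{Z}$, where $\phi_i(\chi)$ records which length-$\Delta_i$ interval contains $\chi(M_i):=\langle\chi,\mathbbm{1}_{M_i}\rangle$. The key observation is that if $\chi_1\neq\chi_2$ satisfy $\phi_i(\chi_1)=\phi_i(\chi_2)$ for every $i$, then $|\langle\chi_1-\chi_2,\mathbbm{1}_{M_i}\rangle|<\Delta_i$ for all $i$, so $x:=\tfrac12(\chi_1-\chi_2)\in\{-1,0,1\}^n$ satisfies requirement~1 as soon as the implied constant in $\Delta_i\asymp c_i\sqrt{|M_i|}$ is at most $2$; moreover $|x_j|=1$ exactly on the coordinates $j$ where $\chi_1$ and $\chi_2$ differ. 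So the whole proof reduces to producing two such colorings that in addition differ in at least $n/2$ coordinates.

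To produce a collision at all, take $\chi$ uniform on $\{-1,1\}^n$ and set $\Phi=(\phi_1,\dots,\phi_m)$. Subadditivity of entropy gives $\ent(\Phi(\chi))\le\sum_i\ent(\phi_i(\chi))$, and the role of the two branches of $g$ is exactly to bound $\ent(\phi_i(\chi))$ in terms of $g(c_i)$: when $c_i$ is not too small, $\chi(M_i)$ is a $\pm1$-sum, hence sub-Gaussian with parameter $\sqrt{|M_i|}$, so a Chernoff estimate places all but an $e^{-\Omega(c_i^2)}$ fraction of the mass into the central interval, and an $e^{-\Omega(k^2c_i^2)}$ fraction $k$ intervals out, giving $\ent(\phi_i(\chi))=O\big(e^{-c_i^2/9}\big)$ once the constants are chosen with a bit of room; when $c_i$ is tiny one cannot rely on concentration into a single interval, so one simply notes that $\chi(M_i)$ is (up to exponentially small mass) supported on $O(1/c_i)$ intervals, giving $\ent(\phi_i(\chi))=O(\log(1/c_i))$. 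Summing, $\ent(\Phi(\chi))=O\big(\sum_i g(c_i)\big)=O(n)$, with the constants $\tfrac19$, $0.1$, $\tfrac15$ arranged so that $\ent(\Phi(\chi))$ stays below the threshold we need; in particular the likeliest fiber $\Phi^{-1}(y^\star)$ has size at least $2^{\,n-\ent(\Phi(\chi))}\gg 1$, which already yields a nontrivial collision.

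The main obstacle is upgrading this to requirement~2, i.e.\ forcing the two colliding colorings to differ in at least $n/2$ coordinates. A fiber of $\Phi$, even a large one, can a priori sit inside a Hamming ball of radius much less than $n/2$: by Kleitman's isodiametric inequality a subset of the cube of diameter $<n/2$ can already have size $\binom{n}{\le n/4}=2^{(H_2(1/4)+o(1))n}$ with $H_2(1/4)\approx 0.81$, so a crude ``big fiber'' bound does not by itself suffice. This is precisely where the numerical constants in the hypothesis are spent: one has to push the entropy bound a hair below the Kleitman threshold, so that the chosen fiber genuinely contains a pair at Hamming distance $\ge n/2$, or else run the counting on a structured sub-family of colorings engineered so that any collision automatically differs in $\ge n/2$ coordinates (for instance, colorings constant on the blocks of a fixed perfect matching of $[n]$, or balanced colorings). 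Once such a pair $\chi_1,\chi_2$ is extracted, $x=\tfrac12(\chi_1-\chi_2)$ satisfies both conclusions, and since the whole argument is a counting one there is nothing algorithmic to record --- which is exactly the contrast with Theorem~\ref{thm:LovettMeka} emphasized in the remark above.
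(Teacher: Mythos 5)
The paper does not actually prove Theorem~\ref{thm:Beck}; it is Beck's classical partial coloring lemma, quoted with a pointer to Matou\v{s}ek~\cite{MAT98}, and serves only as a foil for Theorem~\ref{thm:LovettMeka}. So there is no in-paper proof to compare against. Your sketch is the standard entropy/pigeonhole argument from those references, and you correctly identify both load-bearing pieces: the per-set entropy bound $\ent(\phi_i(\chi)) = O(g(c_i))$, with the two branches of $g$ playing exactly the roles you describe, and the use of Kleitman's isodiametric theorem to extract from a large fiber of $\Phi$ a colliding pair at Hamming distance at least $n/2$.

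That said, the sketch stops precisely where the numerics become essential, and your own observation exposes the gap rather than resolving it. Kleitman requires the fiber size to exceed $\binom{n}{\le n/4} = 2^{(H_2(1/4)+o(1))n}$, i.e.\ $\ent(\Phi) < (1-H_2(1/4))n \approx 0.189n$, while the hypothesis supplies only $\sum_i g(c_i) \le n/5 = 0.2n$. Thus the bound $\ent(\phi_i) \le g(c_i)$ with constant $1$ does not close the argument; you need $\ent(\phi_i) \le c\, g(c_i)$ with $c$ strictly below $0.945$, and you neither verify this nor indicate why it should hold. The generous $1/9$ in the exponent (versus the natural Chernoff $1/2$) leaves room on the $x>0.1$ branch, but on the $\ln(1/x)$ branch the slack is far from obvious: a standard Gaussian rounded into width-$x$ bins already has entropy about $\ln(1/x) + \tfrac12\ln(2\pi e)$ nats, exceeding $g(x) = \ln(1/x)$ by an additive constant, so whether the argument closes depends on the exact choice of $\Delta_i$ relative to $c_i\sqrt{|M_i|}$ and cannot be deferred. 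Your fallback --- running the count on a ``structured sub-family'' such as matching-constant or balanced colorings --- is not developed and does not obviously help: such restrictions shrink the ambient space without forcing a collision to spread over $\ge n/2$ coordinates. In short, the sketch has the right shape and the right references, but the conclusion ``$|x_i|=1$ for at least $n/2$ values of $i$'' is exactly what remains unproved.
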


If one ignores the algorithmic aspect, Beck's partial coloring, while assigning vertices to $\{-1,1,0\}$ (instead of $[-1,1]$, thus making it a `partial coloring' in the true sense) only guarantees that $\Omega\left( \frac{n}{\log t}\right)$ edges can be made to have discrepancy $0$. Although~\cite{LM15} did not really need this particular advantage, they do mention that this feature could potentially be useful elsewhere. This seemingly subtle advantage turns out to be crucial in the proof of Theorem~\ref{thm:main}, where we set $\Omega(n)$ edges (that will be called `bad' and `dormant' edges) to have discrepancy $0$.\\

Henceforth, let $V$ and $E$ denote the vertices and edges of our hypergraph respectively. We will need a `pseudorandomness' lemma that informally states that an $\alpha$ fraction of vertices takes up around an $\alpha$ fraction of most edges:
\begin{lemma} 
\label{lem:pseudorandom}
For any $S \subseteq V$ with $|S| = \alpha n$ where $\alpha \in (0,1)$ and a positive real number $K$, there is a subset $E' \subset E$ of size at most $K^{-2} \cdot \alpha n $ such that for every $e \not \in E'$, we have $||e \cap S| - \alpha |e|| \leq  K \lambda$, where $\lambda = \max_{v \perp \overline{\mathbf{1}}, \|v\| = 1}\|Mv\|$.
\end{lemma}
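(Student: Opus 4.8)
The plan is to use a standard spectral (expander-mixing-type) argument. Let $S \subseteq V$ with $|S| = \alpha n$, and write $\mathbbm{1}_S \in \{0,1\}^V$ for its indicator vector. Decompose $\mathbbm{1}_S = \alpha \overline{\mathbf{1}} + w$, where $w \perp \overline{\mathbf{1}}$ and $\|w\|^2 = \|\mathbbm{1}_S\|^2 - \alpha^2 n = \alpha n - \alpha^2 n = \alpha(1-\alpha)n \le \alpha n$. Now for each edge $e \in E$, the $e$-th coordinate of $M\mathbbm{1}_S$ is exactly $|e \cap S|$, and the $e$-th coordinate of $M(\alpha \overline{\mathbf{1}})$ is $\alpha |e|$. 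Hence $(Mw)_e = |e \cap S| - \alpha|e|$, so that
\[
\sum_{e \in E} \bigl( |e \cap S| - \alpha|e| \bigr)^2 = \|Mw\|^2 \le \lambda^2 \|w\|^2 \le \lambda^2 \alpha n,
\]
using the definition of $\lambda$ applied to the unit vector $w/\|w\|$ (the case $w = 0$ being trivial).

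Given this, I would define the exceptional set by $E' \defeq \{ e \in E : |\,|e \cap S| - \alpha|e|\,| > K\lambda \}$. Every edge outside $E'$ satisfies the desired bound by construction. To bound $|E'|$, apply Markov's inequality to the sum above: each $e \in E'$ contributes more than $K^2 \lambda^2$ to a sum that is at most $\lambda^2 \alpha n$, so
\[
|E'| \cdot K^2 \lambda^2 < \sum_{e \in E} \bigl( |e\cap S| - \alpha|e|\bigr)^2 \le \lambda^2 \alpha n,
\]
which yields $|E'| < K^{-2} \alpha n$, as claimed. (If $\lambda = 0$ the sum is zero, every edge satisfies $|e\cap S| = \alpha|e|$ exactly, and we may take $E' = \emptyset$.)

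I do not anticipate a serious obstacle here — the lemma is essentially the expander mixing lemma phrased for the incidence operator, and the only points requiring any care are the orthogonal decomposition of $\mathbbm{1}_S$, the computation $\|w\|^2 = \alpha(1-\alpha)n$, and correctly reading off $(Mw)_e = |e\cap S| - \alpha|e|$ from the fact that $M$ has $0/1$ entries with $M(e,v) = 1 \iff v \in e$. The bound $|E'| \le K^{-2}\alpha n$ then drops out of Markov. One minor subtlety worth noting is that $t$-regularity is not actually needed for this particular lemma; it is the hypothesis $\lambda < \infty$ (always true) together with the definition of $\lambda$ that does all the work, so the statement holds for arbitrary hypergraphs.
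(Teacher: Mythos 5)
Your proof is correct and is essentially the same argument as the paper's. The vector $w$ in your orthogonal decomposition $\mathbbm{1}_S = \alpha\overline{\mathbf{1}} + w$ is literally the same vector the paper constructs directly (namely $v(i) = 1-\alpha$ for $i \in S$ and $v(i) = -\alpha$ otherwise), and the rest — the norm computation $\|w\|^2 = \alpha(1-\alpha)n$, reading off $(Mw)_e = |e\cap S| - \alpha|e|$, and the Markov bound on the number of exceptional edges — matches the paper step for step.
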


\begin{proof}
Consider a vector $v \in \R^n$ where $v(i) = 1-\alpha$ for $i \in S$ and $-\alpha$ otherwise. Clearly, $v \in \mathbf{1}^{\perp}$ and so 
\begin{equation}
\label{eqn:pseudorandomnorm}
\|Mv\|^2 \leq \lambda^2 \cdot  \|v\|^2 = \lambda^2 \alpha(1 - \alpha)n.
\end{equation}
 On the other hand, $Mv (e) = (1-\alpha)|e \cap S| - \alpha|e \setminus S| = |e \cap S| - \alpha|e|$, and so 
 \begin{equation}
 \label{eqn:pseudorandomnorm2}
 \|Mv\|^2 = \sum_{e}(|e \cap S| - \alpha|e|)^2.
 \end{equation}
 Putting (\ref{eqn:pseudorandomnorm}) and (\ref{eqn:pseudorandomnorm2}) together, we get that there at most $K^{-2}\cdot \alpha n$ edges $e$ such that \linebreak $||e \cap S|  - \alpha|e|| \geq K\lambda$.
\end{proof}

Since this proof is via partial coloring, let us use $i$ to index the steps of the partial coloring. For a partial coloring $\chi:V \rightarrow [-1,1]$, we call the set of vertices $u$ for which $|\chi(u)| < 1$ as \emph{uncolored}. Let us use  $V^i$ to denote the still uncolored vertices at step $i$ and for an edge $e\in E$, let us denote $e^i := e \cap V^i$. In every step, we invoke Theorem~\ref{thm:LovettMeka} setting $\delta = \frac{1}{n}$ to get the partial coloring, so will have $|V^i|\leq 2^{-i}n$. Let $t' := \max\{t, \lambda\}$ \footnote{In fact, we may assume w.l.o.g. that $\lambda \leq t$ and so $t' = t$ since in the other case, the Beck-Fiala Theorem gives us that the discrepancy is $O(t) = O(\lambda)$. However, this is not needed and the techniques here also handle this case with this minor change.}. \\

We call an edge \emph{dormant} at step $i$ if $|e^i| > 100  t'$. Let us call an edge \emph{bad} in step $i$ if $\left||e^i| - 2^{-i}|e|\right|  \geq  10 \lambda$. Edges that are neither dormant not bad are called \emph{good}. Finally, we say that $e$ is \emph{dead} in step $i$ if $|e^i| \leq 100\lambda$. \\

Informally, the roles of these sets are as follows: In the partial coloring step $i$, we ensure that an edge $e$ edges only get nonzero discrepancy if it is good, i.e., if $|e^i|$ is close to what is expected and is not too large. Even dead edges can be good or bad, and we will not distinguish them while coloring the vertices. However, in the analysis we will break the total discrepancy accumulated by $e$ into two parts: Before it is dead and after. The main point is to bound the discrepancy gained before it becomes dead. After it becomes dead, we simply bound the discrepancy incurred since by its remaining size, i.e., at most $100\lambda$.

First, we make two easy observations:

\begin{claim}
\label{claim:dormant}
If $|V^i| = 2^{-i}n$, then at step $i$, the number of dormant edges is at most $\frac{1}{100}2^{-i}n$.
\end{claim}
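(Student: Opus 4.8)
The plan is to bound the number of dormant edges by a simple averaging (double-counting) argument on vertex–edge incidences restricted to uncolored vertices. Recall an edge $e$ is dormant at step $i$ precisely when $|e^i| = |e \cap V^i| > 100 t'$, where $t' = \max\{t,\lambda\} \geq t$. The key point is that the hypergraph is $t$-regular, so every vertex lies in exactly $t$ edges; hence the total number of incidences between $V^i$ and $E$ is exactly $t \cdot |V^i| = t \cdot 2^{-i} n$.

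First I would write $\sum_{e \in E} |e^i| = \sum_{v \in V^i} \deg(v) = t |V^i| = t \cdot 2^{-i} n$, using $t$-regularity and the assumption $|V^i| = 2^{-i} n$. Next, since each dormant edge contributes more than $100 t'$ to this sum and all contributions are nonnegative, if $D_i$ denotes the set of dormant edges at step $i$ we get $100 t' \cdot |D_i| < \sum_{e \in E} |e^i| = t \cdot 2^{-i} n \leq t' \cdot 2^{-i} n$. Dividing through by $100 t'$ yields $|D_i| < \frac{1}{100} 2^{-i} n$, which is exactly the claimed bound.

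There is no real obstacle here; the only thing to be slightly careful about is the use of $t' \geq t$ (so that the factor of $100 t'$ in the dormancy threshold dominates the incidence total $t \cdot 2^{-i} n$), and the harmless fact that the inequality is strict, which only helps. One could equally phrase it via Markov's inequality applied to the ``size profile'' of edges, but the direct double-counting is cleanest. I would present this in two or three lines.
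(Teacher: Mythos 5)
Your proof is correct and is essentially the same as the paper's: the paper invokes Markov's inequality on the average restricted edge size $\tfrac{t|V^i|}{m}$, and your double-counting of incidences $\sum_{e}|e^i| = t|V^i|$ followed by dividing by the dormancy threshold $100t'$ is exactly that averaging argument made explicit. The key observation in both is $t \le t'$, which you correctly flag.
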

\begin{proof}
This is just Markov's inequality, using the fact that the average edge size is \linebreak $\frac{|V^i|t}{m} \leq \frac{|V^i|t'}{m}$.
\end{proof}

\begin{claim}
\label{claim:bad}
If $|V^i| = 2^{-i}n$, then at step $i$, the number of bad edges is at most $\frac{1}{100} 2^{-i}n$.
\end{claim}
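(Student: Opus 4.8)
The plan is to obtain Claim~\ref{claim:bad} as an immediate consequence of the pseudorandomness lemma, Lemma~\ref{lem:pseudorandom}, applied to the fixed vertex set $S = V^i$ with parameters $\alpha = 2^{-i}$ and $K = 10$. The hypothesis $|V^i| = 2^{-i}n$ is exactly what is needed to invoke the lemma with $S = V^i$ and $\alpha = 2^{-i}$, and by definition $e^i = e \cap V^i = e \cap S$, so the quantity $\big||e^i| - 2^{-i}|e|\big|$ appearing in the definition of a bad edge is precisely $\big||e \cap S| - \alpha|e|\big|$.

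Carrying this out: Lemma~\ref{lem:pseudorandom} with $K = 10$ produces a set $E' \subseteq E$ with $|E'| \leq K^{-2}\alpha n = \tfrac{1}{100}\,2^{-i}n$ such that every edge $e \notin E'$ satisfies $\big||e \cap S| - \alpha|e|\big| \leq K\lambda = 10\lambda$, i.e.\ $\big||e^i| - 2^{-i}|e|\big| \leq 10\lambda$. By the definition of a bad edge in step $i$ (namely $\big||e^i| - 2^{-i}|e|\big| \geq 10\lambda$), every bad edge must lie in $E'$. Hence the number of bad edges at step $i$ is at most $|E'| \leq \tfrac{1}{100}\,2^{-i}n$, which is the claim. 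Note that this is exactly why the constant $10$ was chosen in the definition of a bad edge: it matches the choice $K = 10$, for which $K^{-2} = \tfrac{1}{100}$ gives the desired bound.

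There is essentially no obstacle here; the only point worth a word is that Lemma~\ref{lem:pseudorandom} is a deterministic statement valid for \emph{every} vertex subset of the prescribed size, so applying it to the particular (and data-dependent) set $V^i$ of still-uncolored vertices is legitimate without any further argument. The entire content of the claim is the translation of the spectral pseudorandomness bound into the language of bad edges at step $i$.
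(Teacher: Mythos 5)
Your proof is correct and matches the paper's (one-line) proof exactly: the paper also obtains the claim by applying Lemma~\ref{lem:pseudorandom} with $K = 10$ and $\alpha = 2^{-i}$, and you have simply written out the routine translation from the lemma's conclusion to the definition of a bad edge at step $i$.
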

\begin{proof}
This is by setting $K= 10$ and $\alpha = 2^{-i}$ in Lemma~\ref{lem:pseudorandom}.
\end{proof}

\subsection{Partial coloring using Lemma~\ref{lem:pseudorandom}}

\begin{proof}[Proof of Theorem~\ref{thm:main}]
Setting $V^0 = V$, we proceed by partial coloring that colors exactly half the remaining uncolored vertices at each stage. For a step $i \geq 0$, suppose that $|V^i| = 2^{-i}n$. We will describe a partial coloring given by $\chi_i : V^i \rightarrow [-1,1]$ that colors half the vertices of $V^i$. \\

For $\ell \geq 1$, let $ A_{\ell} := \{e \in E ~|~ |e| \in [100\cdot2^{\ell }t',100\cdot2^{\ell + 1} t')\}$, and $A_0 := \{e \in E ~|~ |e| < 200 t'\}$. Observe that the edges in $A_{\ell}$ for $\ell \geq 1$ are either bad or dormant in steps $i < \ell$. Also observe that $|A_{\ell}| \leq \frac{2^{-\ell}}{100}n$ for $\ell \geq 1$,  Define constants $\{c_e\}_{e \in E}$ as follows: 

\begin{equation*}
c_e = \begin{cases}
4\sqrt{2 \ln\left(\frac{1}{2^{\ell - i}}\right)} &\text{if $e \in A_{\ell}$ for $\ell \geq 1$ is good}\\
4 \sqrt{\ln\left(\frac{200 t'}{2^{-i} |e|}\right)} &\text{if $e \in A_0$ is good}\\
0 & \text{otherwise}.
\end{cases}
\end{equation*}
Let $\mathcal{B} = \mathcal{B}^i$ and $\mathcal{D} = \mathcal{D}^i$ denote the bad and dormant edges respectively. We handle the edges in $A_0$ and $E \setminus A_0$ separately. For edges in $E \setminus A_0$, we have:

\begin{align*}
\sum_{e \in E \setminus A_0}e^{-\frac{c_e^2}{16}} & \leq \sum_{e \in E \setminus (\mathcal{B} \cup \mathcal{D} \cup A_0)}e^{- \frac{c_e^2}{16}}  + |\mathcal{B}| + |\mathcal{D}| \\
& \leq \sum_{1 \leq \ell \leq i}\sum_{e \in A_{\ell}} e^{2\ln(2^{\ell - i})}  + \frac{2^{-i}n}{50} \\
& = \sum_{1 \leq \ell \leq i} |A_{\ell}|2^{2(\ell - i)} + \frac{2^{-i}n}{50} \\
& \leq \frac{n}{100}\sum_{\ell \leq i}2^{-\ell} \cdot 2^{2\ell - 2i}+ \frac{2^{-i}n}{50} \\
& = \frac{2^{-i}n}{100}\sum_{\ell \leq i}2^{\ell - i} + \frac{2^{-i}n}{50} \\
& \leq \frac{2^{-i}n}{25}. \\
\end{align*}

The second inequality above follows from Claim~\ref{claim:dormant} and Claim~\ref{claim:bad}. For the other case, we have 

\[
\sum_{e \in A_0}e^{-\frac{c_e^2}{16}} \leq \sum_{e \in A_0}e^{\ln\left(\frac{2^{-i}|e|}{200t}\right)} = \frac{2^{-i}}{200}\sum_{e \in E}\frac{|e|}{t'} = \frac{2^{-i}n}{200}.
\]

Here we have used the fact that since the hypergraph is $t$-regular, we have $\sum_{e \in E}|e| = nt \leq nt'$. Putting these together, we have

\[
\sum_{e \in E} e^{-\frac{c_e^2}{16}} \leq \frac{2^{-i}n}{200} +  \frac{2^{-i}n}{50} \leq \frac{|V^i|}{20}.
\]

 Therefore, Theorem~\ref{thm:LovettMeka} guarantees that there is a fractional coloring $\chi_i: V^i \rightarrow [-1,1]$ such that
 \begin{itemize}
\item[1.] $|\chi_i(v)| \geq 1 - \frac{1}{n}$ for at least half of $V^i$.
\item[2.] All the bad and dormant edges get discrepancy $0$.
\item[3.] A good and live edge $e$ gets discrepancy at most $c_e\sqrt{|e^i|}$.
\end{itemize}

Finally, we pick an arbitrary subset of all the vertices $v$ such that $|\chi_i(v)| \geq 1 - \frac{1}{n}$ of size exactly $(1/2) \cdot |V^i|$ and round them to the nearest integer. It is easy to see that since every edge has size at most $n$, this rounding, over all the steps of the partial coloring adds discrepancy of at most $1$ for every edge. This completes step $i$ of the partial coloring and we are left with $2^{-(i+1)}n$ uncolored vertices for the next step. \\

For an edge $e$, let $i$ be a round where $e$ had incurred non-zero discrepancy and $e^i$ was not dead. Since only good edges incur nonzero discrepancy, $|e^{i}| = 2^{-i}|e| \pm 10\lambda$. Since $e$ is also not dead at step $i$, we must have that $|e^{i}| \geq 100\lambda$. This gives us that $2^{-i}|e| \geq 90\lambda$ and therefore $(1/2) \cdot 2^{-i}|e| \leq |e_i| \leq 2 \cdot2^{-i}|e|$. So, if $e \in A_{\ell}$ where $\ell \geq 1$, the total discrepancy incurred by $e$ at step $i$ without the rounding step is at most 

\[
4\sqrt{2 \ln(1/2^{j - i})e^{i}} \leq 8\sqrt{200 \ln(1/2^{\ell - i}) \cdot ( 2^{\ell - i})\cdot  t'}.
\]

Here, we have used the fact that $|e| \leq 100\cdot 2^{\ell + 1}t'$. If $e \in A_0$, the discrepancy incurred by $e$ at step $i$ without the rounding is at most

\[
4\sqrt{2 \ln\left(\frac{200t'}{2^{- i}|e|}\right)e^{i}} \leq 8\sqrt{200 \ln(1/2^{ - i}) \cdot ( 2^{- i})\cdot  t'}.
\]

 Therefore, the discrepancy of an edge $e \in A_{\ell}$ for $\ell \geq 0$ until it becomes dead is at most 
\begin{align*}
\sum_{i \geq \ell}8\sqrt{200  \ln(1/2^{\ell - i})\cdot (2^{\ell-i})  \cdot t'} = O(\sqrt{t'}) = O(\sqrt{t} + \lambda).
\end{align*}
Here we have used the fact that $t' = \max\{t,\lambda\}$. Finally, rounding the color of every vertex to its nearest integer increases the discrepancy by at most $1$. When the edge becomes dead, we simply bound its discrepancy by its size $O(\lambda)$.\\

It remains to check that each of the $O(\log n)$ stages of partial coloring can be done in time $\tilde{O}((m+n)^3n^2)$, and the constants $\{c_e\}_{e \in E}$ take $\tilde{O}(mn)$ time to compute at each stage, thus establishing the algorithmic part.
\end{proof}

\section{Conclusion}

We have given an upper bound on $t$-regular hypergraph discrepancy in terms of $t$ and a spectral property of the incidence matrix. However, when one restricts attention to random $t$-regular hypergraphs, the $O(\sqrt{t})$ bound is achieved only when $m = \Omega(n)$.  In the case where $m = o(n)$, one can replace $\lambda$ in Theorem~\ref{thm:main} by $\lambda'$ where
\[
\lambda'(\mathcal{H}) := \max_{\substack{U \subset V \\ |U| = 16m}} ~~\max_{\substack{v \perp \overline{1}, \\\|v\| = 1,\\ \operatorname{supp}(v) \subseteq U}}\|Mv\|
\]
and the proof would remain the same. This is because using the partial coloring theorem (Theorem~\ref{thm:LovettMeka}), one may assign colors to all but at most $16m$ vertices while maintaining that the discrepancy of \emph{every} edge is $0$. However, when $\mathcal{H}$ is a random $t$ regular hypergraph with $n$ vertices and $m = o(n)$ edges, we need not have $\lambda'(\mathcal{H}) = O\left(\sqrt{t}\right)$ (in fact, the guess would be $O(\sqrt{tn/m})$). The problem is that Claim~\ref{claim:smallpairs} (In Appendix~\ref{sec:norm}) does not extend. However, in this regime, we believe that with high probability, the discrepancy is much \emph{lower} than $\sqrt{t}$ (in contrast to $\lambda$ growing). \\

Recently, Franks and Saks~\cite{FS18} showed that for $n = \tilde{\Omega}(m^3)$, the discrepancy is $O(1)$ almost surely. Independently, Hoberg and Rothvoss~\cite{HR19} considered a different model of random hypergraphs with $n$ vertices and $m$ edges and each vertex-edge-incidence is an i.i.d. $\operatorname{Ber}(p)$ random variable. They show that if $n = \tilde{\Omega}(m^2)$, the discrepancy is $O(1)$ almost surely. Both~\cite{FS18} and~\cite{HR19} used similar Fourier analytic techniques inspired by~\cite{KLP12}. Moreover, it was an open question in~\cite{HR19} whether the hypergraph with i.i.d $\operatorname{Ber}(1/2)$ incidences where $n = O(m \log m)$ almost surely has discrepancy $O(1)$. This was shown to be true by the author~\cite{P18}. \\

We argue that this is an interesting regime for random regular hypergraphs, as this kind of discrepancy bound is not implied by the Beck-Fiala conjecture. The case where $n = \Omega(m \log m)$, is of particular interest, since we believe there is a phase transition for constant discrepancy at this point. On the one hand, we do not know if the discrepancy bound given by Corollary~\ref{cor:ranreg} is the truth, and on the other hand, we do not know if random regular hypergraphs with, for example, $n = \Theta(m^{1.5})$ almost surely has discrepancy $O(1)$. We conclude with a conjecture, building on an open problem (open problem $1$) in~\cite{FS18}:

\begin{conjecture}
There is an absolute constant $K >0$ such that the following holds. Let $t>0$ be any integer and $\mathcal{H}$ be a random $t$-regular hypergraph on $n$ vertices and $K\frac{n}{\log n}$ edges. Then with high probability,
\[
\disc(\mathcal{H}) = O(1).
\]
\end{conjecture}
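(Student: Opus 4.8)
This is a conjecture strengthening open problems in~\cite{FS18,HR19}, so the proof I would attempt is a second-moment / Fourier-analytic programme in the spirit of~\cite{KLP12,HR19,FS18,P18}, adapted to the random $t$-regular model. Fix the target $D=O(1)$ and let $Z=Z_D$ count the colorings $x\in\{\pm1\}^n$ with $\|Mx\|_\infty\le D$. It suffices to show (i) $\mathbb{E}[Z]\to\infty$ and (ii) $\mathbb{E}[Z^2]=(1+o(1))\mathbb{E}[Z]^2$, since Paley--Zygmund then gives $\mathbb{P}(Z>0)=1-o(1)$; if one only obtains $\mathbb{E}[Z^2]=O(\mathbb{E}[Z]^2)$, one first gets $\mathbb{P}(Z>0)=\Omega(1)$ and then boosts to $1-o(1)$ by a bounded-differences argument over the vertices' (independent) edge-choices, or by a two-round exposure exploiting that adding single-vertex edges changes $\disc$ by at most one.

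For the first moment, vertex- and edge-symmetry make $\mathbb{P}(\|Mx\|_\infty\le D)$ depend only on $p=|\{v:x_v=1\}|$, so $\mathbb{E}[Z]=\sum_p\binom{n}{p}q(p)$, and the sum is dominated near $p=n/2$. For balanced $x$ one has $Mx(e)=\sum_{v\in e}x_v$; the edge sizes concentrate around $tn/m=(t/K)\log n$, and a local central limit theorem for $\pm1$ sums together with a negative-association estimate across edges gives $q(n/2)\ge\bigl(cD/\sqrt{(t/K)\log n}\bigr)^{m}$. With $m=Kn/\log n$ this is $\mathbb{E}[Z]\ge 2^{n}\exp\!\bigl(-\tfrac{Kn}{2\log n}\log\tfrac{(t/K)\log n}{c^2D^2}\bigr)=2^{n(1-o(1))}$, which tends to infinity whenever $t=n^{o(1)}$ (in particular for any fixed $t$ and any constant $K$); for $t$ polynomial in $n$ one needs $K$ below an explicit threshold, and I would either restrict to that regime or fall back on Beck--Fiala-type reductions for large $t$. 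The threshold constant $K$ in the conjecture comes out precisely of this computation, matching the $n=\Theta(m\log m)$ shape flagged in the conclusion.

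The second moment is the crux. Writing $\mathbb{E}[Z^2]=\sum_{x,y}\mathbb{P}(\|Mx\|_\infty\le D,\ \|My\|_\infty\le D)$, the summand depends only on the overlap $\rho=\langle x,y\rangle/n$. Expanding $\mathbb{P}(Mx=a,My=b)$ as a Fourier sum over $(\mathbb{Z}_L)^m\times(\mathbb{Z}_L)^m$ with $L=\Theta(n)$, the characteristic function factors over vertices as $\prod_v\mathbb{E}_S\bigl[\cos\bigl(\tfrac{2\pi}{L}\langle x_v\theta+y_v\phi,\mathbbm{1}_S\rangle\bigr)\bigr]$ with $S$ a uniform $t$-subset of $[m]$, so everything is controlled by the single function $\Phi(\psi)=\mathbb{E}_S[\cos(\tfrac{2\pi}{L}\langle\psi,\mathbbm{1}_S\rangle)]$ evaluated at $\psi\in\{\theta+\phi,\theta-\phi\}$ with multiplicities read off from the four cells of $(x,y)$. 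For $|\rho|$ small one shows the $\theta$ and $\phi$ contributions nearly decouple, via a two-dimensional local CLT, producing $(1+o(1))\mathbb{E}[Z]^2$; for $|\rho|$ bounded away from $0$ one must bound the contribution by $o(1)\cdot\mathbb{E}[Z]^2$ using quantitative decay of $\Phi$ (a ``spread'' edge-set makes $|\Phi(\psi)|$ exponentially small in the number of active coordinates of $\psi$) and summing the resulting geometric series against the binomial entropy $h\!\bigl(\tfrac{1+\rho}{2}\bigr)n$ of the corresponding type class. This last step---obtaining $\Phi$-estimates strong enough, uniformly in $L$, to beat $h\!\bigl(\tfrac{1+\rho}{2}\bigr)n$ for \emph{every} $\rho$, and handling the mild dependence coming from ``exactly $t$ ones per column'' rather than i.i.d.\ incidences---is where essentially all the difficulty lies and is the main obstacle; I expect it to require both a coupling/moment comparison with the $\mathrm{Ber}(t/m)$-incidence model of~\cite{HR19} and a careful case analysis of $\psi$ according to how concentrated its support is, much as in~\cite{FS18,HR19,P18}.
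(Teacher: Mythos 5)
The statement you are asked to prove is a \emph{conjecture}: it is stated at the end of the paper's conclusion, explicitly offered as an open problem extending Open Problem~1 of~\cite{FS18}, and the paper gives no proof of it. There is therefore no ``paper's own proof'' to compare against. What you have written is a research programme, not a proof, and you yourself name the central gap: obtaining decay estimates on the characteristic function $\Phi$ that are strong enough, uniformly in the overlap $\rho$ and in the discretisation parameter $L$, to dominate the entropy term $h\bigl(\tfrac{1+\rho}{2}\bigr)n$ in the second-moment sum, while simultaneously handling the dependence introduced by the exactly-$t$-ones-per-column constraint. That is not a technical loose end; it is precisely where all the cited works~\cite{KLP12,HR19,FS18,P18} spend their effort, and it is not currently known how to carry it out in the regular model at density $m=\Theta(n/\log n)$.

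Beyond the acknowledged gap, several steps in your sketch are asserted rather than argued. The first-moment lower bound invokes ``a local CLT plus negative association'' for $q(n/2)$, but in the $t$-regular column model the coordinates of $Mx$ are neither independent nor individually sums of i.i.d.\ signs (edge sizes are random and correlated), which is exactly the complication that forces~\cite{FS18} into a markedly more delicate analysis than the i.i.d.\ Bernoulli model of~\cite{HR19}; the claim that the threshold $K$ ``comes out of this computation'' presupposes that this computation can be carried out. The claimed decoupling of the $\theta$ and $\phi$ integrals for small $|\rho|$ needs a quantitative two-dimensional local CLT whose covariance again reflects the regular model, not independence. And the proposed boost from $\mathbb{P}(Z>0)=\Omega(1)$ to $1-o(1)$ via bounded differences is not routine: $Z$ counts exponentially many colourings and re-choosing a single column can change $Z$ multiplicatively, so one would instead need a sharp-threshold argument or small-subgraph conditioning, as is standard for second-moment arguments on random regular structures. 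None of these objections kill the programme, but together they mean the proposal describes the difficulty rather than resolving it, and so does not constitute a proof of the conjecture.
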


\paragraph{Acknowledgements} I would like to thank Jeff Kahn for suggesting the problem, Huseyin Acan and Cole Franks, and Shachar Lovett for the extremely helpful discussions, and the anonymous ICALP referees for the numerous comments on a previous version.

\bibliographystyle{alpha}
\bibliography{references.bib}

\appendix

\section{Appendix}

\subsection{A martingale inequality}

We will state a martingale inequality that we will use in the proof of Theorem~\ref{thm:norm}. A sequence of random variables $X_0,X_1,\ldots, X_n$ martingale with respect to another sequence of random variables $Z_0, Z_1,\ldots, Z_n$ such that for all $i \in [n-1]$, we have $X_i = f_i(Z_1,\ldots Z_i)$ for some function $f_i$, and $\E[X_{i+1}|Z_i,\ldots, Z_1] = X_i$.\\

A martingale is said to have the $C$-bounded difference property if $|X_{i+1} - X_{i}| \leq C$.

The variance of a martingale is the quantity:
\[
\sigma^2 = \sum_{i \in [n-1]} \sup_{(Z_1,\ldots, Z_i)} \E[(X_{i+1} - X_i)^2| Z_1,\ldots, Z_i].
\]

We get good large deviation inequalities for martingales with bounded differences and variances (see, for example,~\cite{CL06}, Theorem 6.3 and Theorem 6.5). For a martingale $X_0, X_1,\ldots, X_n$ with respect to $Z_0, Z_1,\ldots, Z_n$, with the $C$-bounded difference property and variance $\sigma^2$, we have
\begin{equation}
\label{ineq:BdVar}
\mathbb{P}(|X_n - X_0| \geq \lambda ) \leq e^{-\frac{t^2}{2(\sigma^2 + C\lambda/3)}}.
\end{equation}

\subsection{Proof of Theorem~\ref{thm:norm}}
\label{sec:norm}

We shall now prove Theorem~\ref{thm:norm}. Recall that we only need to prove the case where $m = n$, As mentioned before, we adapt the proof technique of Kahn and Szemer\'{e}di for our random model (also see~\cite{BFSU98}). We have that the regularity is $t \ll m^{1/2}$. \\

We shall prove that for every $x$, and $y$ such that $\|x\| = \|y\| = 1$ and $x \perp \overline{1}$, we have that $|y^tMx| \leq O(\sqrt{t})$. First, we `discretize' our problem by restricting $x$ to belong to the $\epsilon$-net
\[
T := \left\{x \in \left( \frac{\epsilon}{\sqrt{m}}\mathbb{Z}\right)^m ~|~\|x\| \leq 1 \text{ and } x \perp \overline{1}\right\}
\]
and $y$ belonging to 
\[
T' := \left\{y \in \left( \frac{\epsilon}{\sqrt{m}}\mathbb{Z}\right)^m ~|~ \|y\| \leq 1 \right\}
\]
for a small enough constant $\epsilon$. \\

\begin{claim}[\cite{FKS}, Proposition 2.1)]
If for every $x \in T$, and $y \in T'$, we have that $\|y^tMx\| \leq \alpha$, then we have that for every $z \in \R^m$ such that $\|z\| = 1$, we have that $\|Mz\| \leq (1 - 3\epsilon)^{-1}\alpha$.
\end{claim}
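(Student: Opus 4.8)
The plan is to prove this standard $\epsilon$-net reduction, which says that controlling the bilinear form $y^t M x$ over the two nets $T$ and $T'$ suffices to control the operator norm $\max_{\|z\|=1}\|Mz\|$ up to a factor $(1-3\epsilon)^{-1}$. First I would observe that since $M$ maps into $\R^m$ and $\|Mz\| = \max_{\|w\|=1} w^t M z$, it is enough to bound $w^t M z$ for arbitrary unit vectors $z$ (with $z \perp \overline{1}$, which is harmless since the statement as phrased is about all $z$, but the relevant application only needs $z\perp\overline 1$ — I would just carry whichever hypothesis is needed) and $w$. The key point is a rounding/approximation argument: given a unit vector $z$, one can find $x \in T$ with $\|z - x\|$ small, and similarly $w' \in T'$ approximating $w$.

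The main step is the quantitative approximation. For a unit vector $z \perp \overline 1$, round each coordinate to the nearest multiple of $\epsilon/\sqrt m$; the resulting vector $x'$ satisfies $\|z - x'\|_\infty \le \epsilon/(2\sqrt m)$, hence $\|z - x'\| \le \epsilon/2$. One must then correct $x'$ so that it still lies in $\overline 1^\perp$ and has norm $\le 1$; the projection onto $\overline 1^\perp$ only decreases the distance, and a small rescaling keeps us inside the ball while remaining on the lattice up to a negligible further error — here I would either absorb the constants (getting $\|z-x\| \le \epsilon$ for some $x\in T$, perhaps after replacing $3\epsilon$ by a slightly larger constant in the bookkeeping) or note that $T$ can be taken to be the lattice points of norm $\le 1+\epsilon$ with no loss. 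The upshot is: every unit $z\perp\overline 1$ is within distance $\epsilon$ (say) of some $x \in T$, and every unit $w$ is within distance $\epsilon$ of some $y \in T'$.

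Now the usual telescoping/geometric-series bound finishes it. Write $N := \max_{\|z\|=1, z\perp \overline 1}\|Mz\|$ and let $z,w$ be unit vectors achieving $w^t M z = N$ with $z \perp \overline 1$. Pick $x \in T$, $y \in T'$ with $\|z-x\|,\|w-y\| \le \epsilon$. Then
\[
N = w^t M z = y^t M x + (w-y)^t M x + w^t M (z-x),
\]
so $N \le \alpha + \|w - y\|\,\|Mx\| + \|M(z-x)\| \le \alpha + \epsilon N + \epsilon N$, using $\|Mx\| \le N$ (since $x\perp\overline 1$, $\|x\|\le 1$) and $\|M(z-x)\| \le N\|z-x\|$ because $z - x$ is a difference of vectors in $\overline 1^\perp$ and normalizing it still lands in the set defining $N$. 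Rearranging gives $N(1 - 2\epsilon) \le \alpha$; to match the stated constant $(1-3\epsilon)^{-1}$ one just tracks the slightly lossier approximation (the extra $\epsilon$ coming from the lattice-rounding of $w$ being controlled only in $\ell_\infty$, or from the norm-correction step) — I would not belabor which $\epsilon$'s get combined.

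The main obstacle, such as it is, is purely technical bookkeeping: making the approximation argument respect \emph{both} constraints $x \perp \overline 1$ and $\|x\| \le 1$ simultaneously while staying on (or negligibly close to) the scaled lattice, and then accounting honestly for all the $\epsilon$ losses so that the final constant is exactly $(1-3\epsilon)^{-1}$ rather than something like $(1-2\epsilon)^{-1}$. There is no conceptual difficulty — this is a packaging of the standard covering-number argument (\cite{FKS}, Proposition 2.1) — and I would cite that source for the precise constant and present the three-term splitting above as the proof.
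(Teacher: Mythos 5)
Your proposal is correct and is essentially the same $\epsilon$-net reduction as the paper, with only a minor algebraic difference in how the bilinear form is split: the paper expands $\langle y+w_1, M(x+w_2)\rangle$ into four terms (two cross terms at $\epsilon\|Mz\|$ each plus a quadratic term at $\epsilon^2\|Mz\|$, giving $(1-3\epsilon)^{-1}$), whereas your telescoping identity $w^tMz = y^tMx + (w-y)^tMx + w^tM(z-x)$ yields only two error terms and hence the tighter $(1-2\epsilon)^{-1}$. Your worry at the end about needing to ``lose'' an extra $\epsilon$ to match $(1-3\epsilon)^{-1}$ is misplaced: $(1-2\epsilon)^{-1}\alpha \le (1-3\epsilon)^{-1}\alpha$, so your bound strictly implies the stated one, and the genuine technical point (constructing a lattice vector in $\overline 1^{\perp}$ within distance $\epsilon$ of $z$) is, as in the paper, simply invoked as a standard fact about this net.
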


\begin{proof}
Let $z = \operatorname{argmax}_{\|z\| = 1}\|Mz\|$. We shall use the fact that there are $x \in T$, and $y \in T'$ such that $\|x - z\| \leq \epsilon$, and $\left\|y - \frac{Mz}{\|Mz\|} \right\| \leq \epsilon$. With this in mind, we have:
\begin{align*}
\|Mz\| & = \left\langle \frac{Mz}{\|Mz\|}, Mz \right\rangle  = \langle y + w_1, M(x+w_2) \rangle \\
& = y^tMx + \langle w_1, Mx \rangle + \langle y,Mw_2 \rangle + \langle w_1 ,Mw_2\rangle.
\end{align*}

Where $|w_1|,|w_2| \leq \epsilon$. We note that each of the terms $\langle w_1, Mx \rangle$ and $\langle y,Mw_2 \rangle$, and $\langle w_1 ,Mw_2\rangle$ are upper bounded by $\epsilon \|Mz\|$, and $\langle w_1 ,Mw_2 \rangle \leq \epsilon^2 \|Mz\|$. Combining this, and using the fact that $\epsilon^2 \leq \epsilon$, we have
\[
\|Mz\| \leq (1 - 3\epsilon)^{-1}y^tMx \leq (1 - 3\epsilon)^{-1}\alpha.
\]
\end{proof}

So now, will need to only union bound over $T \cup T'$. It is not hard to see that each of these has size at most $|T|,|T'| \leq \left(\frac{C_v}{\epsilon}\right)^m$ for some absolute constant $C_v$.\\

Indeed, we have:
\begin{align*}
|T| & \leq \left(\frac{\sqrt{m}}{\epsilon}\right)^m\operatorname{Vol}\left\{x \in \R^m ~|~ \|x\| \leq 1+ \epsilon\right\}\\
&  \leq \left(\frac{\sqrt{m}}{\epsilon}\right)^m \cdot \frac{1}{\sqrt{\pi m}} \left( \frac{2 \pi e}{m}\right)^{m/2}(1+\epsilon)^m  \\
&\leq \left(\frac{C_v}{\epsilon}\right)^m
\end{align*}

for some constant $C_v$.\\

We split the pairs $[m] \times [m] = L \cup \overline{L}$ where $L := \{(u,v) ~|~ |x_uy_v| \geq \sqrt{t}/m\}$, which we will call `large entries' and write our quantity of interest:
\begin{align*}
\sum_{(u,v) \in [m] \times [m]} x_uM_{u,v}y_v = \sum_{(u,v) \in L}x_uM_{u,v}y_v + \sum_{(u,v) \in \overline{L}}x_uM_{u,v}y_v.
\end{align*}

\textbf{For the large entries}: For a set of vertices $A \subset [m]$ and a set of edges $B \subset [m]$, let us denote $I(A,B)$ to be the number of vertex-edge incidences in $A$ and $B$. Let us use $\mu(A,B) := \E[|I(A,B)|]$.

\begin{lemma}
\label{lem:dense}
There is a constant $C$ such that, for every set $A$ of vertices and every set $B$ of hyperedges where $|A| \leq |B|$, we have that with probability at least $1 -m^{- \Omega(1)}$, $I := |I(A,B)|$ and $\mu := \mu(A,B)$ satisfy at least one of the following:
\begin{enumerate}
\item $I \leq C \mu$
\item $I \log \left(I/\mu \right) \leq C |B| \log \left( m / |B| \right)$.
\end{enumerate}
\end{lemma}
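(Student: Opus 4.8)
\emph{Proof proposal.} The plan is to reduce the statement, for a single fixed pair $(A,B)$, to a Chernoff-type tail bound for the random variable $I=I(A,B)$, and then to pay for a union bound over all pairs using the ``entropy versus energy'' slack that the second alternative provides; I will prove the (simultaneous) statement that with probability $1-m^{-\Omega(1)}$ every pair $(A,B)$ with $|A|\le|B|$ satisfies the first or the second alternative.

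First I would record the law of $I(A,B)$. In the random model each vertex $u$ chooses its $t$ edges independently of all other vertices, so $I(A,B)=\sum_{u\in A}X_u$ where $X_u$ is the number of edges of $B$ chosen by $u$; the $X_u$ are independent, $0\le X_u\le t$, and $\E[X_u]=t|B|/m$, hence $\mu=\mu(A,B)=|A|\,|B|\,t/m$. The tail estimate I would establish is: for every integer $k\ge 1$,
\[
\mathbb{P}\big(I(A,B)\ge k\big)\;\le\;\Big(\frac{e\mu}{k}\Big)^{k}.
\]
This follows by a direct union bound: $\{I\ge k\}$ is the union over all sets $\mathcal I$ of $k$ distinct vertex--edge incidences inside $A\times B$ of the event that every incidence of $\mathcal I$ is realized; grouping $\mathcal I$ by vertex and using that a fixed vertex chooses a prescribed set of $d$ edges with probability $\binom{m-d}{t-d}/\binom{m}{t}\le (t/m)^{d}$, each such event has probability at most $(t/m)^{k}$, and there are at most $\binom{|A|\,|B|}{k}\le (e|A|\,|B|/k)^{k}$ of them. (Only $t\le m$ is used here.) One easy case can be cleared immediately: if $|B|\ge m/2$ then $I\le |A|t=\mu\, m/|B|\le 2\mu$, so the first alternative holds deterministically for $C\ge 2$; from now on assume $|B|<m/2$.

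Next I would fix sizes $a_0=|A|\le b_0=|B|<m/2$ and a large absolute constant $C$, and bound the probability that some pair of these sizes violates both alternatives. For a fixed $(A,B)$ the ``bad'' event is $\{I>C\mu\}\cap\{I\ln(I/\mu)>C\,b_0\ln(m/b_0)\}$, so by the tail estimate
\[
\mathbb{P}(\text{bad})\;\le\;\sum_{\substack{k\,:\,k>C\mu,\\ k\ln(k/\mu)>C b_0\ln(m/b_0)}}\Big(\frac{e\mu}{k}\Big)^{k}.
\]
On this range $k>C\mu\ge e^{2}\mu$ (take $C\ge e^{2}$), so $\ln(k/e\mu)\ge \ln(k/\mu)-1\ge\tfrac12\ln(k/\mu)$, whence $(e\mu/k)^{k}=e^{-k\ln(k/e\mu)}\le e^{-\frac12 k\ln(k/\mu)}<e^{-\frac{C}{2}b_0\ln(m/b_0)}$; moreover consecutive summands shrink by a factor $\le e^{-1/2}$ (because $k\mapsto k\ln(k/\mu)$ is increasing with increments $\ge\ln(k/\mu)>1$), so the whole sum is at most $3\,e^{-\frac{C}{2}b_0\ln(m/b_0)}$. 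There are at most $\binom{m}{a_0}\binom{m}{b_0}\le\binom{m}{b_0}^{2}\le e^{2b_0\ln(em/b_0)}$ pairs of these sizes, and for $b_0<m/2$ one has $b_0\ln(em/b_0)\le 3\,b_0\ln(m/b_0)$ and $b_0\ln(m/b_0)\ge\ln m$. Hence the total contribution of sizes $(a_0,b_0)$ is at most $3\,e^{(6-C/2)b_0\ln(m/b_0)}\le 3\,m^{\,6-C/2}$, and summing over the at most $m^{2}$ choices of $(a_0,b_0)$ gives $\mathbb{P}(\exists(A,B)\ \text{bad})\le 3\,m^{\,8-C/2}=m^{-\Omega(1)}$ once $C$ is a large enough absolute constant.

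I expect the only real obstacle to be the bookkeeping in the last step: making the geometric-tail reduction to the smallest bad $k$ airtight, and verifying that the exponent $k\ln(k/e\mu)$ delivered by the tail bound genuinely dominates the entropy $b_0\ln(em/b_0)$ of the union bound once the second alternative is violated --- this is precisely where the sharp (not exponential-in-$t$) form $(e\mu/k)^{k}$ and the choice of $C$ both enter. The distribution of $I$, the single-vertex probability estimate, and the reduction of the case $|B|\ge m/2$ are routine.
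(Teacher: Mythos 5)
Your proposal is correct, and while the basic strategy (a per-pair tail bound of the form $\mathbb{P}(I\ge k)\le(e\mu/k)^k$ plus a union bound over sets $A$ and $B$) is the same one the paper uses, the way you organize the sum is genuinely different and, I think, cleaner. The paper partitions on whether $i=|I(A,B)|$ is above or below $\log^2 m$, handles the large-$i$ regime by a crude $2^{2m}\cdot 2^{-\log^2 m}$ bound, and for small $i$ must first deduce $Cb\le\log^3 m$ and $I\ge Cb/2$ before it can sum. You instead compare the tail exponent $k\ln(k/e\mu)$ directly to the entropy $b\ln(em/b)$ of the union bound and observe that the failure of alternative (2) is exactly what forces the former to dominate the latter; the geometric decay of $(e\mu/k)^k$ on $k>C\mu$ then absorbs the sum. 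This avoids any artificial threshold in $i$. A second minor improvement: your per-incidence probability estimate $\binom{m-d}{t-d}/\binom{m}{t}\le(t/m)^d$ is exact as a product of ratios $\frac{t-j}{m-j}\le t/m$ and needs only $t\le m$, whereas the paper's Claim~\ref{claim:fixed} keeps a factor of $2$ and explicitly invokes $t=o(\sqrt m)$ for it. (The $t=o(\sqrt m)$ hypothesis is still needed elsewhere in the appendix, so this does not enlarge the range of the theorem, but it does make the present lemma self-contained without it.) Two small bookkeeping points worth spelling out if you write this up: (i) the set of ``bad'' $k$ is an upward interval because both $k$ and $k\ln(k/\mu)$ are increasing on $k>\mu$, which is what lets you start the geometric sum at the smallest bad $k$; and (ii) the increment $g(k+1)-g(k)$ for $g(k)=k\ln(k/e\mu)$ is bounded below by $g'(k)=\ln(k/\mu)$ by convexity, which with $C\ge e^2$ gives ratio $\le e^{-2}$ per step --- slightly better than the $e^{-1/2}$ you quoted, but either way the constant $3$ is safe. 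Everything else (the reduction to $|B|<m/2$, the estimate $b_0\ln(em/b_0)\le 3\,b_0\ln(m/b_0)$ for $b_0<m/2$, the lower bound $b_0\ln(m/b_0)\ge\ln m$, and the final $m^{8-C/2}$ tally) checks out.
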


This lemma is sufficient to show that the large pairs do not contribute too much, as shown by the following lemma, which is the main part of the proof of Kahn and Szemer\'{e}di.

\begin{lemma}[\cite{FKS}, Lemma 2.6,~\cite{BFSU98}, Lemma 17] 
If the conditions given in Lemma~\ref{lem:dense} are satisfied, then $\sum_{(u,v) \in L} \left|x_u M_{u,v}y_v \right| = O(\sqrt{t})$ for all $x,y \in T$.
\end{lemma}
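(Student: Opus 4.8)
The plan is to prove the ``large pairs'' lemma by a counting/entropy argument that partitions $L$ dyadically according to the magnitude of $|x_u y_v|$ and, separately, according to the amount of incidence mass deposited by $M$ in each block, and then invokes Lemma~\ref{lem:dense} on each block. Concretely, I would first record the simple deterministic facts about $L$: since $\sum_u x_u^2 \le 1$ and $\sum_v y_v^2 \le 1$, the number of pairs $(u,v)$ with $|x_u y_v| \ge \sqrt t/m$ is at most $m^2/t$ (by Markov/Cauchy--Schwarz applied to $\sum_{u,v} x_u^2 y_v^2 \le 1$), and more generally the number of pairs with $|x_u y_v| \ge \beta$ is $O(1/\beta^2)$. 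This controls the ``geometry'' of the net vectors and is what makes the dyadic sum converge.

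Next I would set up the double dyadic decomposition. For integers $s,r \ge 0$ let $L_{s} := \{(u,v)\in L : |x_uy_v| \in [2^{s}\sqrt t/m,\ 2^{s+1}\sqrt t/m)\}$, and within each $L_s$ group the vertices/edges appearing there into level sets by their $|x_u|$ and $|y_v|$ values (again dyadically), so that each resulting block is of the form $A\times B$ with $|x_u|\approx a$ on $A$, $|y_v|\approx b$ on $B$, and $ab\approx 2^s\sqrt t/m$. For such a block the contribution to $\sum_{L}|x_u M_{u,v} y_v|$ is at most $ab\cdot I(A,B)$ where $I(A,B)$ counts incidences, so everything reduces to bounding $ab\cdot I(A,B)$. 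Here is where Lemma~\ref{lem:dense} enters: in case (1), $I(A,B) \le C\mu(A,B) = C|A||B|t/m$ (using that a random $t$-regular column puts expected $t/m$ incidences per edge), and one checks $ab\cdot|A||B|t/m = O(\sqrt t)$ times a geometrically small factor using $|A|a^2\le 1$, $|B|b^2\le 1$; in case (2), $I\log(I/\mu) \le C|B|\log(m/|B|)$ forces $I$ to be only mildly super-linear in $|B|$, and one converts the logarithmic gain into the decay needed to sum over $s$ and over the dyadic blocks. One must be careful about the orientation $|A|\le |B|$ required by Lemma~\ref{lem:dense}, handling the other case by transposing roles (the incidence count is symmetric).

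The last step is to assemble the pieces: sum $ab\cdot I(A,B)$ over all $O(\log^2 m)$ choices of the $(a,b)$ levels within a fixed $L_s$, then sum over $s$. The $s$-sum converges because $|L_s|=O((m/(2^s\sqrt t))^2\cdot 1) $ shrinks like $4^{-s}$ while the per-pair weight only grows like $2^s$, giving a net $2^{-s}$; the inner $\log^2 m$ factors are absorbed because each appears against an exponentially small combinatorial count or against the $\log$-gain from case (2) of Lemma~\ref{lem:dense}. The total is $O(\sqrt t)$, uniformly over $x,y\in T$ — and since $|T|,|T'|\le (C_v/\epsilon)^m$ and Lemma~\ref{lem:dense} already carries a failure probability of $m^{-\Omega(1)}$ for a fixed $(A,B)$, one takes a union bound over the (at most $4^m$) relevant pairs of subsets, which is where one needs the exponent in the $m^{-\Omega(1)}$, or rather a $e^{-\Omega(m\log\cdots)}$-type bound, to dominate the net size; this union-bound bookkeeping, together with getting the case-(2) logarithmic gain to out-run the $\log^2 m$ blocking factors, is the main obstacle. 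Everything else is the routine dyadic-pigeonhole calculation that I would not grind through here.
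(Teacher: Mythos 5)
You have correctly identified that this is the Kahn--Szemer\'edi ``heavy pairs'' argument, and your dyadic decomposition --- split $L$ by the magnitude of $|x_u y_v|$, refine into level sets where $|x_u|\approx a$ and $|y_v|\approx b$, bound each block's contribution by $ab\cdot I(A,B)$, apply the dichotomy of Lemma~\ref{lem:dense}, and sum --- is indeed the skeleton of the proof that the paper is outsourcing. (Note that the paper does not prove this lemma: it cites it directly from~\cite{FKS} and~\cite{BFSU98}, and your sketch is a reconstruction of that cited argument, not an alternative route.) Your bookkeeping facts ($|A|a^2\le 1$, $|B|b^2\le 1$, and $ab\ge\sqrt{t}/m$ giving $ab\cdot|A||B|\,t/m\le\sqrt{t}$ in case (1)) are correct and are the right ingredients.

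There is, however, a genuine conceptual error in your final paragraph. You describe the union bound over $x,y\in T$ and over the $4^m$ choices of subsets $(A,B)$ as ``the main obstacle,'' requiring the failure probability of Lemma~\ref{lem:dense} to beat the net size. That is not where the difficulty lies, because there is no probability left in this lemma at all. Lemma~\ref{lem:dense} is proved (in the appendix) by a union bound over \emph{all} pairs of sizes $a,b$ and all incidence counts $i$, so the high-probability event is ``for every $A$ and every $B$ with $|A|\le|B|$, one of (1), (2) holds.'' The present lemma is stated conditionally: \emph{if} those conditions hold, \emph{then} $\sum_{(u,v)\in L}|x_u M_{u,v} y_v| = O(\sqrt{t})$ for all $x,y\in T$. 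This is a deterministic implication about a fixed matrix $M$; you never need to union bound over $T$, $T'$, or over subsets here. The union bound over $T\cup T'$ shows up only in the \emph{small-pairs} martingale estimate (Claim~\ref{claim:smallpairs}), where the per-$(x,y)$ tail genuinely has to beat $(C_v/\epsilon)^m$. Misplacing it here both invents a difficulty that does not exist and obscures the actual hard part, which you then explicitly decline to carry out: making the case-(2) bound $I\log(I/\mu)\le C|B|\log(m/|B|)$ yield decay that closes the double dyadic sum (and handling the orientation issue when the vertex-side level set is larger than the edge-side one, where $|A|\le|B|$ fails). Until that calculation is actually done, the sketch is a correct table of contents for the~\cite{FKS} argument rather than a proof.
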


Notice that since we are bounding $\sum_{(u,v) \in L} \left|x_u M_{u,v}y_v \right| = O(\sqrt{t})$, which is much stronger than what we really need, it is okay to consider both $x$ and $y$ from $T$.

\begin{proof}[Proof of Lemma~\ref{lem:dense}]
First, we observe that it is enough to consider $|B| \leq m/2$, since otherwise, $|I(A,B)| \leq d|A| \leq 2\mu(A,B)$. Let $\mathcal{B}_i(a,b)$ denote the event that there is an $A$ of size $a$ and a $B$ of size $b$ which do not satisfy either of the conditions (with a fixd constant $C$ to be specified later) and $|I(A,B)| = i$. Before, we prove the lemma, let us make some observations, which (in hindsight) help us compute the probabilities much easier. Let $A$ be a set of $a$ vertices and $B$ be a collection of $b$ edges, such that $a \leq b \leq m/2$. \\

The point here is that we basically want to evaluate the sum:
\begin{align*}
\mathbb{P}\left(\bigcup_{a,b,i}\mathcal{B}_i(a,b) \right) & \leq \sum_i \mathbb{P}\left(\bigcup_{a,b}\mathcal{B}_i(a,b)\right) \\
& = \sum_{i \leq \log ^2m}\mathbb{P}\left(\bigcup_{a,b}\mathcal{B}_i(a,b)\right) + \sum_{i \geq \log^2 m}\mathbb{P}\left(\bigcup_{a,b}\mathcal{B}_i(a,b)\right).
\end{align*}
The first observation is that every term in the second sum is small. Towards this, we have the straightforward claim.
\begin{claim}
\label{claim:fixed}
For a set of vertices $A$ and edges $B$ and a set of possible incidences $J \subset A \times B$, we have that $\mathbb{P}(I(A,B) = J) \leq \left( \frac{2t}{m}\right)^{|J|}$.
\end{claim}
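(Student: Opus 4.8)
## Proof Proposal for Claim~\ref{claim:fixed}

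The plan is to condition on the randomness vertex by vertex and use the structure of the random $t$-regular model, in which each vertex independently selects $t$ distinct edges uniformly at random from the $m$ available edges. Write $J = \bigcup_{u \in A} (\{u\} \times J_u)$, where $J_u \subseteq B$ records the prescribed incidences at vertex $u$; note $\sum_{u \in A} |J_u| = |J|$. Since the choices made by distinct vertices are mutually independent, the event $\{I(A,B) \supseteq J\}$ (which is what we really need, and which dominates the event $\{I(A,B) = J\}$) factors as $\prod_{u \in A} \mathbb{P}(J_u \subseteq N(u))$, where $N(u)$ is the random $t$-subset of edges chosen by $u$. So it suffices to bound, for a single vertex $u$ and a fixed set $S = J_u$ of size $s = |J_u|$, the probability that all $s$ edges of $S$ lie among the $t$ random edges picked by $u$.

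The key estimate is the per-vertex bound $\mathbb{P}(S \subseteq N(u)) \le (t/m)^{s} \cdot (\text{small correction})$, which I would prove as follows. Reveal the $t$ chosen edges of $u$ one at a time (as an ordered sample without replacement). The probability that a given target edge is hit at a given draw, conditioned on the history, is at most $t/(m - t + 1) \le 2t/m$ once we use $t = o(\sqrt m)$ (so in particular $t \le m/2$, giving $m - t + 1 \ge m/2$). More carefully: the probability that a fixed $s$-subset $S$ is contained in the random $t$-subset $N(u)$ equals $\binom{m-s}{t-s}/\binom{m}{t} = \frac{t(t-1)\cdots(t-s+1)}{m(m-1)\cdots(m-s+1)} \le \left(\frac{t}{m-s+1}\right)^{s} \le \left(\frac{t}{m-t+1}\right)^{s} \le \left(\frac{2t}{m}\right)^{s}$, where the last step uses $m - t + 1 \ge m/2$. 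Taking the product over $u \in A$ then yields
\[
\mathbb{P}(I(A,B) = J) \le \mathbb{P}(J \subseteq I(A,B)) = \prod_{u \in A} \mathbb{P}(J_u \subseteq N(u)) \le \prod_{u \in A}\left(\frac{2t}{m}\right)^{|J_u|} = \left(\frac{2t}{m}\right)^{|J|}.
\]

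The only mild subtlety — and the thing I would be most careful about — is the passage from $\binom{m-s}{t-s}/\binom{m}{t}$ to $(2t/m)^s$: one must check that $t - s \ge 0$ (otherwise the event is impossible and the probability is $0$, so the bound holds trivially) and that the denominator terms $m - j$ for $0 \le j \le s-1$ are all at least $m/2$, which again follows from $s \le t = o(\sqrt m) \le m/2$. Everything else is a routine product of independent per-vertex events; there is no real obstacle here, and the claim is essentially a bookkeeping lemma setting up the union bound over the net $T \cup T'$ in the main argument of Theorem~\ref{thm:norm}.
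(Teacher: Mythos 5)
Your proof is correct and follows the same basic route as the paper: split $J$ vertex by vertex, use independence of distinct vertices' edge choices, bound a per-vertex hypergeometric probability, and take the product over $A$. The one real point of difference is the per-vertex estimate. The paper works with the exact probability $\mathbb{P}(N(u)\cap B = J_u) = \binom{m-b}{\,t - t_u}/\binom{m}{t}$ (with $t_u = |J_u|$) and controls the denominator via $1/\binom{m}{t} \le 2\,t!/m^t$, which is precisely the step that genuinely uses $t = o(\sqrt m)$. You instead pass to the larger containment event $\{J_u \subseteq N(u)\}$, whose probability $\binom{m-s}{t-s}/\binom{m}{t}$ telescopes to $\prod_{j=0}^{s-1}\frac{t-j}{m-j}\le \bigl(t/(m-t+1)\bigr)^{s} \le (2t/m)^{s}$, needing only $t\le m/2$. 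This is a small but clean refinement: it removes the dependence on $|B|$ from the per-vertex bound, handles the $s=0$ factors with no spurious constants, and asks much less of the hypothesis on $t$, while arriving at the identical conclusion.
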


\begin{proof}
W.L.O.G, let $A = \{1,\ldots, a\}$, and for $i \in A$, let $t_i = I(\{i\},B)$. We have that:
\begin{align*}
\mathbb{P}(I(A,B) = J)  = \prod_{i \in A}\frac{\binom{m-b}{t- t_i}}{\binom{m}{t}}  \leq  \prod_{i \in A} 2 \frac{(m - b)^{t - t_i}}{(t - t_i)!} \frac{t!}{m^t} \leq \prod_{i \in A}\left(\frac{2t}{m}\right)^{t_i} \leq \left(\frac{2t}{m}\right)^{|J|}.
\end{align*}
\end{proof}

Here, the first inequality uses the fact that $t = o(\sqrt{m})$. Therefore, we have:
\iffalse
If $\mathcal{B}_i(a,b)$ holds for some $A$ and $B$, we have that $I \log (I/\mu) > Cb \log(n/b)$. In other words, $\left( \frac{n}{b}\right)^{Cb} \cdot \left( \frac{\mu}{I} \right)^I \leq 1$. 
\fi

\begin{align*}
\mathbb{P}\left(\mathcal{B}_i(a,b)\right) \leq \binom{m}{a}\binom{m}{b}\binom{ab}{i}\left(\frac{2t}{m}\right)^i  \leq \binom{m}{b}^2\left(e\frac{abt}{mi}\right)^i \leq \binom{m}{b}^2 \left(\frac{\mu}{i}\right)^i  (e)^i.
\end{align*}

If $i \geq 2e \mu$ and $i \geq \log^2 m$, this probability is at most $2^{2m} \cdot 2^{- \log^2 m} \ll m^{-\Omega(\log m)}$. Thus 
\[
\sum_{i \geq \log^2 m} \mathbb{P}\left(\bigcup_{a,b}\mathcal{B}_{i}(a,b)\right) \leq \sum_{a,b}\sum_{i \geq \log^2 m} \mathbb{P}\left(\mathcal{B}_{i}(a,b)\right) \leq m^{- \Omega(\log m)}.
\]

It remains to deal with the sum $\sum_{i \leq \log ^2m}\Pr\left(\bigcup_{a,b}\mathcal{B}_i(a,b)\right)$. For these summands, we have that if $|I(A,B)| \leq \log^2m$ and $I \log (I/\mu) > Cb \log(m/b)$, then
\[
I\log m \geq I \log (I/\mu) > Cb \log(m/b) \geq Cb.
\] 
 and so $Cb \leq  \log^3 m$. The first inequality above comes from the observation that $I \leq ab$ and so $I / \mu \leq m/t \leq m$. Now, using that $I \log m \geq Cb \log (m/ \log^3 m)$, we have that $I \geq Cb/2$. 

Therefore, we only need to evaluate the sum:
\begin{align*}
\sum_{i = Cb/2}^{\log^2 m}\mathbb{P}\left(\mathcal{B}_i(a,b)\right) & \leq \binom{m}{a}\binom{m}{b}\sum_{i = Cb/2}^{\log^2 m}\binom{ab}{i}\left( \frac{10et}{m}\right)^i  \leq \binom{m}{b}^2\sum_{i = Cb/2}^{\log^2 m} \left( \frac{10e^2abt}{im}\right)^i \\
& \leq \log^2m \left(\frac{em}{b}\right)^{2b}\left(\frac{20e^2at}{Cm}\right)^{Cb/2} \\
& = m^{2b - Cb/2} b^{-2b}a^{Cb/2}t^{Cb/2}(20e^2)^{2b} \\
& \leq m^{2b - Cb/4} b^{Cb/2 - 2b} (20e^2)^{2b}\\
& = m^{- \Omega(b)}.
\end{align*}

We have used the fact that $t = o(\sqrt{m})$, $b \geq a$ and $b\leq \log^3 m$. Thus union bounding over $\log^3 m$ many values of $a$ and $b$, we have $\sum_{a,b \leq \log^3 m}\sum_{i \leq \log^2 m} \mathbb{P}\left(\bigcup_{a,b}\mathcal{B}_{i}(a,b)\right) = m^{- \Omega(1)}$.
\end{proof}

\textbf{For the small entries:} Bounding the contribution from the small entries is much easier. The analysis given here is slightly different to the one given in~\cite{FKS} and~\cite{BFSU98}. However, it does not make much of a difference, and is still, essentially, the same large deviation inequality. We will first compute the expected value of the quantity of interest using the following claim:

\begin{claim}
\label{claim:sumbound}
We have that:
\[
\left| \sum_{(u,v) \in \overline{L}} x_u y_v\right| \leq \frac{m}{\sqrt{t}}.
\]
\end{claim}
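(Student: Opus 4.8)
The plan is to exploit the constraint $x \perp \overline{1}$, which is available precisely because $x \in T$. This forces $\sum_{u \in [m]} x_u = 0$, so the full double sum telescopes away:
\[
\sum_{(u,v) \in [m]\times[m]} x_u y_v \;=\; \Bigl(\sum_{u} x_u\Bigr)\Bigl(\sum_{v} y_v\Bigr) \;=\; 0 .
\]
Consequently $\sum_{(u,v)\in\overline{L}} x_u y_v = -\sum_{(u,v)\in L} x_u y_v$, and by the triangle inequality it suffices to bound the contribution of the \emph{large} entries: $\bigl|\sum_{(u,v)\in\overline{L}} x_u y_v\bigr| \le \sum_{(u,v)\in L}|x_u y_v|$.

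Next I would use the defining property of $L$, namely $|x_u y_v| \geq \sqrt{t}/m$ for every $(u,v)\in L$, to trade one factor of $|x_u y_v|$ for $\tfrac{m}{\sqrt t}\,|x_u y_v|^2$, i.e.\ $|x_u y_v| \le \tfrac{m}{\sqrt t}\,|x_u y_v|^2$ on $L$. Summing over $L$ and then dropping the restriction (which only enlarges the sum) gives
\[
\sum_{(u,v)\in L}|x_u y_v| \;\le\; \frac{m}{\sqrt t}\sum_{(u,v)\in[m]\times[m]}|x_u|^2|y_v|^2 \;=\; \frac{m}{\sqrt t}\,\|x\|^2\|y\|^2 \;\le\; \frac{m}{\sqrt t},
\]
using $\|x\|,\|y\|\le 1$. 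This is exactly the claimed bound.

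The only real point — and the one step that makes the estimate work at all — is recognizing that cancellation is essential: a term-by-term absolute bound on $\sum_{(u,v)\in\overline{L}}|x_u y_v|$ is hopeless, since for the uniform vectors $x_u = y_v = 1/\sqrt m$ every pair lies in $\overline{L}$ and that sum equals $m$. One is therefore forced to pass through the orthogonality relation $\sum_u x_u = 0$ and instead control the large entries, where the lower bound $|x_u y_v|\ge \sqrt t/m$ is precisely what is needed to produce the factor $m/\sqrt t$. Everything after that observation is a one-line $\ell_2$ computation, so I do not expect any further obstacle.
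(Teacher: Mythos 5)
Your proof is correct and is essentially identical to the paper's: both use $x \perp \overline{1}$ to reduce to the sum over $L$, then exploit the defining lower bound $|x_u y_v| \ge \sqrt{t}/m$ on $L$ together with $\|x\|,\|y\|\le 1$ to bound $\sum_{L}|x_u y_v|$ by $\frac{m}{\sqrt t}\sum x_u^2 y_v^2 \le \frac{m}{\sqrt t}$. The only difference is cosmetic (you multiply through by $m/\sqrt t$ whereas the paper divides by $\sqrt t / m$); there is no gap.
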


\begin{proof}
Since $\sum x_i = 0$, we have $\left( \sum x_i\right) \left(\sum y_i \right) = \sum_{(u,v) \in L} x_uy_v + \sum_{(u,v) \in \overline{L}}x_uy_v = 0$
or 
\[
\left| \sum_{(u,v) \in \overline{L}} x_u y_v\right|  = \left| \sum_{(u,v) \in L} x_u y_v\right|.
\]
To bound this, we note that 
\begin{align*}
1 & = \left(\sum x_u^2\right) \left( \sum y_u^2 \right) \geq \sum_{(u,v) \in L} x_u^2y_v^2  \geq \frac{\sqrt{t}}{m}\left| \sum_{(u,v) \in L}x_uy_v \right| =  \frac{\sqrt{t}}{m}\left| \sum_{(u,v) \in \overline{L}}x_uy_v \right|.
\end{align*}

which gives us what we want.
\end{proof}

Given Claim~\ref{claim:sumbound} above, we can easily compute the expectation:
\[
\E\left[ \sum_{(u,v) \in \overline{L}}x_uM_{u,v}y_v \right] = \frac{t}{m} \sum_{(u,v) \in \overline{L}}x_uy_v \in [- \sqrt{t}, \sqrt{t}].
\]
\begin{claim}
\label{claim:smallpairs}
We have that with high probability, $\sum_{(u,v) \in \overline{L}}x_uM_{u,v}y_v = O(\sqrt{t})$.

\end{claim}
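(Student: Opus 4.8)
The plan is to prove Claim~\ref{claim:smallpairs} by a martingale concentration argument, using the bounded-difference martingale inequality~(\ref{ineq:BdVar}) stated in the appendix, followed by a union bound over the nets $T$ and $T'$. Fix $x \in T$ and $y \in T'$. The randomness is generated by the vertices one at a time: vertex $u$ chooses its $t$ edges, and we expose these choices in order $u = 1, \ldots, m$. Let $Z_u$ denote the set of $t$ edges chosen by vertex $u$, and let $X_u = \E\left[\sum_{(a,v)\in\overline{L}} x_a M_{a,v} y_v \mid Z_1, \ldots, Z_u\right]$ be the Doob martingale, so that $X_0 = \frac{t}{m}\sum_{(u,v)\in\overline{L}} x_u y_v \in [-\sqrt{t},\sqrt{t}]$ by the computation just before the claim, and $X_m = \sum_{(u,v)\in\overline{L}} x_u M_{u,v} y_v$ is our target quantity.

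The two things I must control are the per-step difference bound $C$ and the variance $\sigma^2$. Exposing $Z_u$ only affects the terms with first coordinate $u$, i.e.\ the contribution $x_u \sum_{v : (u,v)\in\overline{L}} M_{u,v} y_v$; averaging over the other vertices' choices does not matter since they are independent, so $|X_u - X_{u-1}| \le 2|x_u|\sum_{v\in Z_u, (u,v)\in\overline{L}} |y_v| \le 2|x_u| \cdot t \cdot \frac{\sqrt{t}}{m|x_u|} = \frac{2t^{3/2}}{m}$, using the definition of $\overline{L}$, which says $|x_u y_v| < \sqrt{t}/m$ exactly on these pairs, so each $|y_v| < \sqrt{t}/(m|x_u|)$. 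Thus I can take $C = O(t^{3/2}/m)$; since $t = o(\sqrt{m})$, this is $o(1/\sqrt{m})$. For the variance I would bound $\E[(X_u - X_{u-1})^2 \mid Z_1,\ldots,Z_{u-1}]$ by $x_u^2 \cdot \Var\left(\sum_{v : (u,v)\in\overline{L}} M_{u,v} y_v\right)$ up to constants; since vertex $u$ picks $t$ edges (roughly independently, negatively associated), this variance is at most $O(t) \cdot \max_v y_v^2 \cdot (\text{something})$ — more carefully, it is at most $\frac{t}{m}\sum_{v:(u,v)\in\overline{L}} y_v^2$ up to a constant. Summing over $u$ gives $\sigma^2 \le O\left(\frac{t}{m}\right)\sum_u x_u^2 \sum_v y_v^2 \le O(t/m) \cdot \|x\|^2\|y\|^2 = O(t/m)$. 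Wait — that is too small; I should double check: a cleaner route is $\sigma^2 \le \sum_u x_u^2 \cdot t \cdot \left(\frac{\sqrt t}{m|x_u|}\right)^2 \cdot (\text{number of } v) $, which after using $\sum_v$-type bounds gives $\sigma^2 = O(t)$. I expect $\sigma^2 = O(t)$ to be the right order, matching $X_0 = O(\sqrt t)$.

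With $C = o(1/\sqrt m)$ and $\sigma^2 = O(t)$, inequality~(\ref{ineq:BdVar}) with deviation $\Lambda = K\sqrt{t}$ gives
\[
\mathbb{P}\left(|X_m - X_0| \ge K\sqrt{t}\right) \le \exp\left(-\frac{K^2 t}{2(\sigma^2 + C K\sqrt t/3)}\right) \le \exp\left(-\Omega(K^2)\right),
\]
since $C K\sqrt t = o(\sqrt t/\sqrt m)\cdot K \ll \sigma^2$. Choosing $K$ a large enough constant multiple of $\sqrt{m}$ — more precisely $\Lambda = K\sqrt{tm}$ for suitable constant $K$ — the failure probability is $\exp(-\Omega(K^2 m)) = (C_v/\epsilon)^{-\omega(m)}$, which beats the net size $|T|,|T'| \le (C_v/\epsilon)^m$ from the volume bound above. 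Union bounding over all $(x,y) \in T \times T'$, with probability $1 - \exp(-\Omega(m))$ we get $\left|\sum_{(u,v)\in\overline L} x_u M_{u,v} y_v\right| \le X_0 + O(\sqrt{tm}) = O(\sqrt{tm})$ for \emph{all} $x \in T, y \in T'$ — but this is off by a $\sqrt m$ factor, so the honest statement needs the deviation at scale $\Lambda = O(\sqrt t \cdot \sqrt m)$ only because the net is exponentially large; to actually get $O(\sqrt t)$ I must instead exploit that I only need the bound to hold with the net union bound absorbed, i.e.\ pick the constant in $\Lambda$ so that $\exp(-\Omega(\Lambda^2/t)) \cdot (C_v/\epsilon)^{2m} = o(1)$, forcing $\Lambda^2/t = \Theta(m)$, hence $\Lambda = \Theta(\sqrt{tm})$. \textbf{This is exactly the point I expect to be the main obstacle}, and the resolution — following Kahn--Szemer\'edi — is that the per-coordinate difference $C$ is so small ($t^{3/2}/m$) that the Bernstein-type bound is governed by the \emph{variance} term for all deviations up to $\sigma^2/C = \Theta(tm/t^{3/2}) = \Theta(\sqrt m \cdot \sqrt t \cdot \sqrt{m}/\sqrt t)$... so in the variance-dominated regime $\mathbb P(|X_m - X_0|\ge \Lambda) \le \exp(-\Lambda^2/(4\sigma^2))$, and with $\sigma^2 = O(t)$ and $\Lambda = K\sqrt{t}$ this is $\exp(-\Omega(K^2))$ — still only a constant-factor saving per point. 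The genuine fix is that the union bound cost $\log|T| = O(m\log(1/\epsilon))$ is paid against $\exp(-\Lambda^2/\sigma^2)$, so one needs $\Lambda = \Theta(\sqrt{\sigma^2 m}) = \Theta(\sqrt{tm})$; but recalling that $\sigma^2$ for a \emph{fixed} $(x,y)$ should really be bounded by $O(t) \cdot \max_u x_u^2 \cdot m \le O(t)$ times a factor that is actually $o(1)$ when $x$ is spread out, and handling the ``spiky'' $x$ separately, one recovers $O(\sqrt t)$. I would carry this out by splitting $T$ according to $\|x\|_\infty$, proving the sharp variance bound $\sigma^2 = O(t \cdot \sqrt{t}/\sqrt m) = o(t)$ that makes the exponent $\Omega(m)$ at scale $\Lambda = O(\sqrt t)$, exactly as in~\cite{FKS,BFSU98}.
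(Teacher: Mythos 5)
Your overall plan — a Doob martingale exposing the random incidences, the Bernstein-type inequality~(\ref{ineq:BdVar}), and a union bound over $T \times T'$ — is exactly the paper's strategy. But there are two concrete problems in how you carry it out, and the second one sends you down a long unnecessary detour.

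\textbf{Filtration granularity.} You expose the entire choice set $Z_u$ of vertex $u$ in one step, giving a filtration of length $m$ and a bounded-difference constant $C = O(t^{3/2}/m)$. The paper instead exposes \emph{one} edge choice $e_{i,j}$ at a time — a filtration of length $mt$ — and since each single increment involves only one pair from $\overline{L}$, the step bound is $C = O(\sqrt{t}/m)$. This factor of $t$ matters. In~(\ref{ineq:BdVar}) the denominator is $2\sigma^2 + C\Lambda/3$. With the correct $\sigma^2 = O(t/m)$ (see below) and target $\Lambda = D\sqrt{t}$: the fine filtration gives $C\Lambda = O(Dt/m)$, comparable to $\sigma^2$ for constant $D$, hence an exponent of $\Omega(D^2 m)$ which beats the net size $\exp(O(m))$ with $D$ a large constant. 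Your coarse filtration gives $C\Lambda = O(Dt^2/m)$, which dominates $\sigma^2$ by a factor of $t$; the exponent drops to $\Omega(Dm/t)$, so beating the net forces $D \gtrsim t$, i.e.\ $\Lambda \gtrsim t^{3/2}$ — not $O(\sqrt{t})$. The refinement of the filtration is therefore not cosmetic: your version would only yield $O(t^{3/2})$.

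\textbf{The variance.} Your first computation, $\sigma^2 \le O(t/m)\|x\|^2\|y\|^2 = O(t/m)$, is \emph{correct}, and is exactly what the paper shows ($\sigma^2 = 20t/m$). Your instinct that this is ``too small'' is what derails you: you revise to $\sigma^2 = O(t)$, conclude you need $\Lambda = \Theta(\sqrt{tm})$, and then propose splitting $T$ by $\|x\|_\infty$ to recover an extra $\sqrt m$. None of that is needed. The magic $1/m$ in $\sigma^2$ is real and comes from $\sum_i y_i^2 \le 1$: the per-row variance is proportional to $y_i^2$ (an $L^2$ mass), not to $1$, so summing over the $m$ rows gives $O(t/m)$, not $O(t)$. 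With $\sigma^2 = O(t/m)$ and the fine filtration's $C = O(\sqrt{t}/m)$, the probability that the martingale deviates by $D\sqrt{t}$ is $\exp(-\Omega(D^2 m))$, and a constant $D$ large enough relative to $\log(C_v/\epsilon)$ already beats $|T|\cdot|T'| \le (C_v/\epsilon)^{2m}$. Trust your first estimate.
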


\begin{proof}
We set up a martingale and use the method of bounded variances. Let us write the quantity that we wish to estimate as 
\[
X := \sum_{(u,v) \in B}x_uM_{u,v}y_v.
\]

We imagine $M$ being sampled one column at a time, and in each column, $t$ entries are sampled. For column $i$, let us denote these by $e_{i,1}, \ldots, e_{i,t}$. Clearly, $X = X(e_{1,1},\ldots, e_{m,t})$. Denote $X_{i,j} := \E[X | e_{1,1} ,\ldots, e_{i,j}]$. For distinct $k,k' \in [m]$, it is easy to see that we have the `Lipschitz property':
\[
|\E[X | e_{1,1},\ldots, e_{i,j-1}, e_{i,j} = k] - \E[X | e_{1,1},\ldots, e_{i,j-1}, e_{i,j} = k']| \leq |x_iy_k| + |x_iy_{k'}|.
\]

Therefore, we have a bounded difference property on $|X_{i,j} - X_{i,j-1}|$ as follows:
\begin{align*}
|X_{i,j} - X_{i,j-1}| & = \Bigg|\E[X | e_{1,1},\ldots, e_{i,j-1}, e_{i,j}]  \\
&~~~~ -  \frac{1}{m - j + 1}\sum_{k' \in [m] \setminus \{e_{i,1},\ldots,e_{i,j-1}\}}\E[X | e_{1,1},\ldots, e_{i,j-1}, e_{i,j} = k'] \Bigg| \\
& \leq |x_{e_j}||y_i| + \frac{1}{m - j + 1}\sum_{k' \in [m] \setminus \{e_{i,1},\ldots,e_{i,j-1}\}}\mathbbm{1}[(k',i) \in \overline{L}]|x_{k'}y_i|\\
\end{align*}

We will use that the above quantity is bounded by $\frac{2\sqrt{t}}{m}$ since we only consider $|x_{e_j}y_i|$ where $(e_j,i) \in \overline{L}$. However, another way to upper bound the above is by using 

\begin{align*}
  &\frac{1}{m - j + 1}\sum_{k' \in [m] \setminus \{e_{i,1},\ldots,e_{i,j-1}\}}\mathbbm{1}[(k',i) \in \overline{L}]|x_{k'}y_i| \\
  &\leq \frac{1}{m - j + 1}\sum_{k' \in [m] \setminus \{e_{i,1},\ldots,e_{i,j-1}\}}|x_{k'}y_i|\\
  &\leq  \frac{|y_i|}{n - j + 1}\sum_{k' \in [m]}|x_{k'}| \\
  &\leq \frac{2|y_i|}{\sqrt{m}}.
\end{align*}

Using this, we now compute the variance of the martingale:
\begin{align*}
\Var(X_{i,j} - X_{i,j-1} | e_{1,1},\ldots, e_{i,j-1}) & \leq \frac{1}{m - j + 1} \sum_{k \in [m]} \left(|x_ky_i| + \frac{2|y_i|}{\sqrt{m}}\right)^2 \\
& \leq \frac{2}{m - j + 1} \sum_{k \in [m]}\left(|x_ky_i|^2 + \frac{4y_i^2}{m} \right) \\
& \leq \frac{10y_i^2}{m - j + 1}.
\end{align*}

Where the last inequality uses that $\sum_k x_k^2 \leq 1$. Therefore, the variance of the martingale is at most$t \cdot \frac{10}{m - t}\sum_i y_i^2 \leq \frac{20t}{m} =: \sigma^2$. This is because $\sum_i y_i^2 \leq 1$. Therefore, by the bounded variance martingale inequality~(\ref{ineq:BdVar}), using $|X_i - X_{i-1}|\leq \frac{2\sqrt{t}}{m} =: C$:

\begin{align*}
\mathbb{P}(X \geq (D+1)\sqrt{t}) & \leq \exp \left\{ - \frac{D^2t}{2\sigma^2 + tC/3} \right\} \leq \exp \left\{ -\frac{D^2t}{\frac{40t}{m} + \frac{2t}{3m}}\right\}  \leq \exp\left\{ -\Omega(D^2m) \right\}.
\end{align*}

For a large enough constant $D$, this lets us union bound over all $x,y \in T$, whose number can be bounded by $\left( \frac{C_v}{\epsilon}\right)^m$.

\end{proof}

\end{document}